\documentclass[11pt,letterpaper,reqno]{amsart}

\usepackage{epsfig}
\usepackage{amsmath}
\usepackage{amssymb}
\usepackage{amsthm}
\usepackage{indentfirst}
\usepackage{xspace}
\usepackage{multirow}
\usepackage{xcolor}
% \definecolor{darkred}{rgb}{0.55,0.05,0.05}
% \hypersetup{colorlinks=true,urlcolor=darkred,linkcolor=darkred,citecolor=darkred}
\usepackage{verbatim}
\usepackage[letterpaper,margin=1in,headheight=15pt]{geometry}
\usepackage{mathpazo}
\usepackage{tikz-cd}
\usepackage{booktabs}
\usepackage{framed}
\usepackage{float}
\usepackage{enumitem}
\definecolor{shadecolor}{rgb}{0.85,0.85,0.85}
\setlist[itemize]{itemsep=1.5pt, topsep=4.5pt}

\newtheorem{thm}{Theorem}

\newtheorem{conj}[thm]{Conjecture}
\newtheorem{lem}[thm]{Lemma}
\newtheorem{prop}[thm]{Proposition}
\newtheorem{rem}[thm]{Remark}

\theoremstyle{definition}
\newtheorem{defn}[thm]{Definition}

\theoremstyle{definition}
\newtheorem{ex}[thm]{Example}

% numbering by section
\numberwithin{thm}{section}
\numberwithin{equation}{section}

% \renewcommand{\sectionautorefname}{\S}
% \renewcommand{\subsectionautorefname}{\S}

% macros for convenience

\newcommand{\fg}{{\mathfrak g}}
\newcommand{\fv}{{\mathfrak v}}
\newcommand{\fb}{{\mathfrak b}}

\newcommand{\cC}{\ensuremath{\mathcal C}}

\newcommand{\cB}{\ensuremath{\mathcal B}}
\newcommand{\cL}{\ensuremath{\mathcal L}}

\newcommand{\cO}{\ensuremath{\mathcal O}}

\newcommand{\cD}{\ensuremath{\mathcal D}}

\newcommand{\R}{\ensuremath{\mathbb R}}
\newcommand{\C}{\ensuremath{\mathbb C}}
\newcommand{\PP}{\ensuremath{\mathbb P}}
\newcommand{\Z}{\ensuremath{\mathbb Z}}

\newcommand{\half}{\ensuremath{\frac{1}{2}}}

\newcommand{\hk}{hyperk\"ahler\xspace}

\newcommand{\bu}{\ensuremath{\mathbf{u}}}
\newcommand{\bzero}{\ensuremath{\mathbf{0}}}

\newcommand{\I}{{\mathrm i}}
\newcommand{\e}{{\mathrm e}}
\newcommand{\de}{\mathrm{d}}

\newcommand{\abs}[1]{\lvert#1\rvert}
\newcommand{\norm}[1]{\lVert#1\rVert}
\newcommand{\IP}[1]{\langle#1\rangle}

\newcommand{\simarrow}{\xrightarrow\sim}

\newcommand{\ti}[1]{\textit{#1}}

\newcommand{\fsl}{\mathfrak{sl}}

\DeclareMathOperator{\ad}{ad}
\DeclareMathOperator{\Ad}{Ad}

\DeclareMathOperator{\Tr}{Tr}
\DeclareMathOperator{\End}{End}

% \mathtoolsset{showonlyrefs}

% \bibliography{oper-limit}

\begin{document}

\title{Opers versus nonabelian Hodge}
\author[O. Dumitrescu]{Olivia Dumitrescu}
\author[L. Fredrickson]{Laura Fredrickson}
\author[G. Kydonakis]{Georgios Kydonakis}
\author[R. Mazzeo]{Rafe Mazzeo}
\author[M. Mulase]{Motohico Mulase}
\author[A. Neitzke]{Andrew Neitzke}

\date{}

{\abstract{For a complex simple simply connected Lie group $G$,
and a compact Riemann surface $C$,
we consider two sorts of families of flat $G$-connections over $C$.
Each family is determined by a point $\bu$ of the
base of Hitchin's integrable system for $(G,C)$. One family
$\nabla_{\hbar,\bu}$
consists of $G$-opers, and depends on
$\hbar \in \C^\times$. The other family
$\nabla_{R,\zeta,\bu}$
is built from solutions of
Hitchin's equations, and depends on
$\zeta \in \C^\times, R \in \R^+$.
We show that in the scaling limit
$R \to 0$, $\zeta = \hbar R$,
we have $\nabla_{R,\zeta,\bu} \to \nabla_{\hbar,\bu}$.
This establishes and generalizes a conjecture
formulated by Gaiotto.}}

\maketitle

\setcounter{page}{1}

\section{Introduction}

\subsection{Summary}

The main result of this paper is the proof of
an extension of a conjecture formulated by Gaiotto
in \cite{Gaiotto2014}, Conjecture \ref{conj:gaiotto} below.
This result concerns certain families of flat $G$-connections, for
a simple, simply connected complex Lie group $G$.
The case $G = SL(N,\C)$ is Theorem \ref{thm:oper-limit-SLN}
below. The general case is Theorem \ref{thm:oper-limit-G}.

\subsection{The case of {$G = SL(2,\C)$}}

Fix a compact Riemann surface $C$ of genus $g \ge 2$
and a holomorphic quadratic
differential $\phi_2$ on $C$. This data determines
two natural families of $SL(2,\C)$-connections
over $C$, as follows.

\medskip

\begin{itemize}
\item First, we consider the family of \ti{opers}
determined by $\phi_2$. These are global versions of
the locally-defined second-order differential operators
(Schr\"odinger operators)
\begin{equation} \label{eq:schrodinger-operator}
  \cD_{\hbar,\phi_2}: \psi(z) \mapsto \left[- \hbar^2 \partial_z^2 + P_2(z)\right] \psi(z),
\end{equation}
where $\hbar \in \C^\times$, and
\begin{equation}
\phi_2 = P_2(z) \, \de z^2
\end{equation}
locally.
The operator $\cD_{\hbar,\phi_2}$ makes sense globally with the following two stipulations:
\begin{itemize}
\item We consider $\psi(z)$ as a section of $K_C^{-1/2}$.
\item We use only coordinate charts in the atlas on $C$ coming from Fuchsian uniformization,
so that the transition maps are M\"obius transformations.
\end{itemize}
By a standard maneuver, replacing $\psi$ by its $1$-jet,
we can convert $\cD_{\hbar,\phi_2}$ to a
flat connection $\nabla_{\hbar,\phi_2}$ in a
rank $2$ vector bundle $E_\hbar$
over $C$. Holomorphically, $E_\hbar$ is an extension
\begin{equation}
  0 \to K_C^{1/2} \to E_\hbar \to K_C^{-1/2} \to 0.
\end{equation}
$E_\hbar$ has distinguished local trivializations defined canonically in terms of coordinate charts
on $C$, and in such a trivialization,
\begin{equation} \label{eq:oper-family-intro}
  \nabla_{\hbar,\phi_2} = \de + \hbar^{-1} \begin{pmatrix} 0 & P_2 \\ 1 & 0 \end{pmatrix} \de z, \qquad \hbar \in \C^\times.
\end{equation}

\item Second, we consider a \ti{Higgs bundle} determined by $\phi_2$:
this is the bundle
\begin{equation}
  E = K_C^{1/2} \oplus K_C^{-1/2}
\end{equation}
equipped with its standard holomorphic structure $\bar\partial_E$, and
a ``Higgs field'' $\varphi \in \Omega^{1,0}(\End E)$ represented
in local trivializations by
\begin{equation}
  \varphi = \begin{pmatrix} 0 &P_2 \\ 1 & 0 \end{pmatrix} \de z.
\end{equation}
According to the nonabelian Hodge theorem
(Theorem \ref{thm:harmonic-existence}), associated to
$(E,\bar\partial_E,\varphi)$
there is a canonical family of flat connections
in $E$, of the form
\begin{equation} \label{eq:hitchin-family-intro}
  \nabla_{\zeta,\phi_2} = \zeta^{-1} \varphi + D_h + \zeta \varphi^{\dagger_h}, \qquad \zeta \in \C^\times,
\end{equation}
where the Hermitian metric $h$ is determined by solving a certain elliptic PDE on $C$ (Hitchin's equation,
 \eqref{eq:hitchin-eq-unrescaled} below), and $D_h$ is the associated Chern connection.
\end{itemize}

\medskip

The families \eqref{eq:oper-family-intro},
\eqref{eq:hitchin-family-intro} are evidently similar;
in particular, their leading terms in the
$\hbar\to 0$ or $\zeta\to 0$ limit match, if we set $\hbar = \zeta$.
However, these two families are \ti{not} exactly the same.

Gaiotto in \cite{Gaiotto2014} proposed a relation between them,
as follows. Introduce an additional parameter $R \in \R^+$
and rescale the Higgs field by $\varphi \to R \varphi$; this leads to a 2-parameter analogue
of \eqref{eq:hitchin-family-intro},
\begin{equation} \label{eq:hitchin-family-intro-twoparam}
  \nabla_{\zeta,R,\phi_2} = R \zeta^{-1} \varphi + D_{h(R)} + R \zeta \varphi^{\dagger_{h(R)}}, \qquad \zeta \in \C^\times, R \in \R^+.
\end{equation}
Now fix $\hbar \in \C^\times$ and consider a scaling limit where
\begin{equation} \label{eq:scaling-limit}
 \zeta = R \hbar, \qquad R \searrow 0.
\end{equation}
(In other words, we take both $R \to 0$ and $\zeta \to 0$,
while holding their ratio $\zeta / R = \hbar$ fixed.)
Gaiotto conjectured that in this limit the connections
$\nabla_{\zeta,R,\phi_2}$ converge,
and that the limiting connection is an oper.
In \S\ref{sec:proof-SU2} below we prove that this is indeed
the case, and that the limiting oper
is equivalent to $\nabla_{\hbar,\phi_2}$ of \eqref{eq:oper-family-intro}.

\subsection{The case of {$G = SL(N,\C)$}}
The story just described has an extension where
we make the following replacements:
\begin{center}
\begin{tabular}{c|c}
quadratic differentials $\phi_2$ & tuples of holomorphic differentials $\bu = (\phi_2, \dots, \phi_N)$ \\
order-$2$ differential operators $\cD_{\hbar,\phi_2}$ & order-$N$ differential operators $\cD_{\hbar,\bu}$ \\
$SL(2,\C)$-connections & $SL(N,\C)$-connections
\end{tabular}
\end{center}
Both the families \eqref{eq:oper-family-intro} and
\eqref{eq:hitchin-family-intro-twoparam} admit generalizations to this setting.
As in the case $G = SL(2,\C)$, we show that these two families
are related by the scaling limit \eqref{eq:scaling-limit}.
This extension is Theorem \ref{thm:oper-limit-SLN}, proven
in \S\ref{sec:proof-SUN}. For the reader's convenience we also
review the construction of differential operators $\cD_{\hbar,\bu}$ generalizing \eqref{eq:schrodinger-operator},
in \S\ref{sec:SLN-opers-diff-ops}.

\subsection{The case of general {$G$}}
Finally we treat the case of a general simple, simply connected complex
Lie group $G$.
Once again, both families \eqref{eq:oper-family-intro} and
\eqref{eq:hitchin-family-intro-twoparam} admit generalizations to this setting,
and we show in Theorem \ref{thm:oper-limit-G} that these
two families are again related by the scaling limit \eqref{eq:scaling-limit}.

\subsection{Punctures}
Strictly speaking, the connections we consider are not quite the same as
those studied in \cite{Gaiotto2014}: that paper mainly
concerned \ti{meromorphic} quadratic
differentials $\phi_2$ on $\C\PP^1$,
rather than holomorphic ones on a Riemann surface $C$. We expect
that the methods of this paper can be generalized to the meromorphic
case, but we do not treat that extension here.

\subsection{Motivations}
One motivation for this work (as well as for \cite{Gaiotto2014})
is the desire to understand the relation
between asymptotics of flat sections of the two families of connections
\eqref{eq:oper-family-intro}, \eqref{eq:hitchin-family-intro}.
The analysis of the $\hbar \to 0$ asymptotic behavior of
Schr\"odinger equations, i.e.~of
\eqref{eq:oper-family-intro}, has a long history; it goes under
the name of the ``WKB approximation,'' more recently sharpened
to the ``exact WKB method.'' See e.g.
\cite{MR2182990,MR3280000} for highly readable accounts.
On the other hand, recently the $\zeta \to 0$ asymptotics
of the families \eqref{eq:hitchin-family-intro}
has been studied in \cite{Gaiotto:2009hg,Gaiotto2012}.
The two analyses address \ti{a priori} different problems,
and involve different methods. In \cite{Gaiotto:2009hg,Gaiotto2012}
the main analytic tool is a certain integral equation related to $tt^*$
geometry, while in the exact WKB method this role is played
by the Borel resummability of solutions of Schr\"odinger equations.
Nevertheless, the formal structures (Stokes graphs and connection
formulae) which appear in the two cases are the same.
Optimistically, the link
between \eqref{eq:oper-family-intro} and \eqref{eq:hitchin-family-intro}
provided by the results of \cite{Gaiotto2014} and
this paper might help in finding a direct
passage between these two asymptotic analyses.

A related motivation comes from physics: the result of this paper
should be helpful in understanding the relation between opers and
the Nekrasov-Shatashvili limit of the
Nekrasov partition function, as formulated most sharply
in \cite{Nekrasov:2011bc} (in the case $G = SL(2,\C)$.)

\subsection{Acknowledgements}

We thank the American Institute of Mathematics for its hospitality
during the workshop ``New perspectives on spectral data for Higgs bundles,''
where this work was initiated. We also thank the organizers and
all the participants of the workshop, and particularly Philip
Boalch for posing the question which led to this work.
We thank David Ben-Zvi and Lotte Hollands
for many useful conversations about opers.

This work is supported by National Science Foundation
awards DMS-1105050 (RM), DMS-1309298 (MM), DMS-1151693 (LF,AN).
OD is a member of the Simion Stoilow Institute of Mathematics of the Romanian Academy.

\section{Background, for {$G = SL(N,\C)$}} \label{sec:background-SLN}

In this section we give some background on the main players in our story:
Hitchin's equations, the Hitchin section, and opers. We specialize
to the case $G = SL(N,\C)$ and thus work with vector bundles rather than
principal bundles.

\subsection{Hitchin's equations} \label{sec:hitchin-equations-SLN}

Fix a compact Riemann surface $C$ of genus $g \ge 2$
and an integer $N \ge 2$. We consider tuples $(E, h, D, \varphi)$ comprised of:
\begin{itemize}
\item A rank $N$ complex vector bundle $E$ over $C$, equipped with a
trivialization of $\det E$,
\item A Hermitian metric $h$ in $E$
which induces the trivial metric on $\det E$,
\item An $h$-unitary connection $D$ in $E$,
\item A traceless section $\varphi$ of $\End(E) \otimes K_C$.
\end{itemize}
\ti{Hitchin's equations} \cite{MR89a:32021} are a system of nonlinear PDE
for these data:
\begin{subequations} \label{eq:hitchin-eq-unrescaled}
\begin{align}
 F_D + [\varphi, \varphi^{\dagger_h}] &= 0, \\
 \bar\partial_D \varphi &= 0.
\end{align}
\end{subequations}
Here $F_D$ denotes the curvature of $D$,
$\dagger_h$ means the adjoint with respect to the metric $h$,
and $\bar\partial_D$ is the $(0,1)$ part of the connection $D$.

We shall actually be considering a rescaled version of \eqref{eq:hitchin-eq-unrescaled},
\begin{subequations} \label{eq:hitchin-eq}
\begin{align}
 F_D + R^2 [\varphi, \varphi^{\dagger_h}] &= 0, \label{eq:hitchin-eq-1} \\
 \bar\partial_D \varphi &= 0, \label{eq:hitchin-eq-2}
\end{align}
\end{subequations}
obtained by replacing $\varphi \to R \varphi$, where $R \in \R^+$.

\subsection{Higgs bundles}

Now suppose given a solution $(E,h,D,\varphi)$ of \eqref{eq:hitchin-eq}.
The operator $\bar\partial_D$ gives a holomorphic structure on $E$.
Equation \eqref{eq:hitchin-eq-2} then says that $\varphi$ is a holomorphic section
of $\End(E) \otimes K_C$. Thus the tuple $(E, \bar\partial_D, \varphi)$ is
an $SL(N,\C)$-Higgs bundle:

\begin{defn}
An \ti{$SL(N,\C)$-Higgs bundle over $C$} is a tuple $(E, \bar\partial_E, \varphi)$:
\begin{itemize}
\item A rank $N$ complex vector bundle $E$ over $C$, equipped with a
trivialization of $\det E$,
\item A holomorphic structure $\bar\partial_E$ on $E$,
\item A traceless holomorphic section $\varphi$ of $\End(E) \otimes K_C$.
\end{itemize}
\end{defn}

\subsection{Harmonic metrics} \label{sec:harmonic-metrics}

Conversely, suppose given an $SL(N,\C)$-Higgs
bundle $(E, \bar\partial, \varphi)$ and
a Hermitian metric $h$ on $E$ inducing the trivial metric on $\det E$.
Then there is a unique $h$-unitary connection $D_h$ in $E$
whose $(0,1)$ part is
$\bar\partial_{D_h} = \bar\partial_E$ (Chern connection).
We write its $(1,0)$ part as $\partial_E^h$, and the full $D_h$ as
\begin{equation}
  D_h = \bar\partial_E + \partial_E^h.
\end{equation}
The equation \eqref{eq:hitchin-eq-2} automatically holds when
$D = D_h$.
The equation \eqref{eq:hitchin-eq-1} with $D = D_h$
becomes a nonlinear PDE for
the metric $h$:
\begin{defn}
Given an $SL(N,\C)$-Higgs bundle $(E, \bar\partial_E, \varphi)$, and $R \in \R^+$,
a \ti{harmonic metric with parameter $R$} is a Hermitian metric $h$ on $E$,
inducing the trivial metric on $\det E$, such that
\begin{equation} \label{eq:harmonic-metric}
F_{D_h} + R^2 [\varphi, \varphi^{\dagger_h}] = 0.
\end{equation}
\end{defn}

Thus, we have
\begin{rem} \label{remark:harmonic-hitchin}
Given an $SL(N,\C)$-Higgs bundle $(E, \bar\partial_E, \varphi)$,
$R \in \R^+$,
and a harmonic metric $h$ with parameter $R$,
the tuple $(E, h, D_h, \varphi)$ gives
a solution of Hitchin's equations \eqref{eq:hitchin-eq}.
\end{rem}

Next we consider the existence of harmonic metrics.
\begin{defn}
An $SL(N,\C)$-Higgs bundle $(E, \bar\partial_E, \varphi)$
is called \ti{stable} if there is no holomorphic subbundle
$E' \subset E$ such that $\varphi(E') \subset E' \otimes K_C$
and $\deg(E') > 0$.
\end{defn}

The following key result
(``nonabelian Hodge theorem'') is proven in
\cite{MR944577}:\footnote{More
precisely, the theorem in \cite{MR944577} concerns $GL(N,\C)$-bundles
rather than $SL(N,\C)$-bundles, but it is straightforward to deduce
the version for $SL(N,\C)$-bundles.}
\begin{thm} \label{thm:harmonic-existence}
Given a stable $SL(N,\C)$-Higgs bundle $(E, \bar\partial_E, \varphi)$,
and any $R \in \R^+$,
there exists a unique harmonic metric $h$ with parameter $R$.
\end{thm}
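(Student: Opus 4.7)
The plan is to reduce immediately to $R = 1$ by absorbing the factor via $\varphi \mapsto R\varphi$ (which preserves stability) and then to follow Simpson's strategy from \cite{MR944577}, adapted to the $SL(N,\C)$ setting. The argument splits into two essentially independent parts: uniqueness, which is a pointwise elliptic maximum principle, and existence, which requires a global analytic input.

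For \emph{uniqueness}, suppose $h_1, h_2$ are two harmonic metrics on the same Higgs bundle, both inducing the trivial metric on $\det E$. The endomorphism $s = h_1^{-1} h_2$ is then positive self-adjoint with $\det s = 1$. Subtracting the two harmonic metric equations and pairing with $s$ yields a Bochner-type identity, which after tracing produces an elliptic inequality of the form $\Delta f \leq 0$ for an appropriate convex scalar invariant $f$ of $s$ (e.g.\ $f = \Tr s + \Tr s^{-1}$), the sign being enforced by the positivity of the $[\varphi, \varphi^{\dagger_h}]$-contribution. The strong maximum principle on the compact surface $C$ then forces $f$ to be constant, and combining this with $\det s = 1$ and positivity gives $s = \Id$.

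For \emph{existence}, I would use the Donaldson heat flow. Fix a background metric $h_0$ with $h_0|_{\det E}$ trivial, write candidate metrics as $h = h_0 H$ with $H$ positive self-adjoint of determinant $1$, and consider the evolution
\begin{equation*}
H^{-1} \frac{\partial H}{\partial t} = -2\I \Lambda\bigl(F_{D_h} + R^2 [\varphi, \varphi^{\dagger_h}]\bigr),
\end{equation*}
where $\Lambda$ is contraction with a fixed Kähler form $\omega$ on $C$. Short-time existence is standard parabolic theory; the trace-free right-hand side preserves $\det H \equiv 1$; stationary points are precisely the harmonic metrics with parameter $R$. Along the flow the Donaldson functional
\begin{equation*}
M(h_t) = \int_C \bigl|\Lambda\bigl(F_{D_{h_t}} + R^2 [\varphi, \varphi^{\dagger_{h_t}}]\bigr)\bigr|^2_{h_t} \, \omega
\end{equation*}
is monotone decreasing, and a parabolic maximum principle combined with Schauder estimates yields long-time existence.

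The \textbf{crux}, and the main obstacle, is convergence as $t \to \infty$; this is where stability enters. A Uhlenbeck--Yau/Simpson dichotomy applies: either $H(t)$ stays uniformly bounded in $C^0$, in which case elliptic regularity together with the monotonicity of $M$ and standard interior estimates gives smooth subsequential convergence to a harmonic metric; or $\sup_C \log \|H(t)\| \to \infty$, in which case a suitable rescaling of $H(t)$ has a weak $L^2_1$ limit $\pi_\infty$ whose spectral projections define a $\varphi$-invariant holomorphic subsheaf $E' \subsetneq E$ with $\deg E' \geq 0$. Stability of $(E, \bar\partial_E, \varphi)$ excludes the second alternative, forcing $H(t)$ to remain bounded and the flow to converge. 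Full (rather than merely subsequential) $C^\infty$ convergence then follows from the uniqueness established above, completing the proof.
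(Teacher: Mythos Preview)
The paper does not supply its own proof of this theorem: it states the result and attributes it to Simpson \cite{MR944577}, with a footnote remarking that the passage from $GL(N,\C)$ to $SL(N,\C)$ is straightforward. So there is no ``paper's own proof'' to compare against beyond that citation.

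Your sketch is a faithful outline of the Simpson/Donaldson argument that the citation points to, and the $SL(N,\C)$ adaptation (tracking $\det s = 1$ in uniqueness, noting that the trace-free right-hand side of the flow preserves $\det H = 1$) is exactly the ``straightforward deduction'' the footnote alludes to. One small point to tighten: in the blow-up alternative you produce a $\varphi$-invariant subsheaf with $\deg E' \ge 0$, but the paper's definition of stability only excludes $\deg E' > 0$; you should either sharpen the conclusion of the dichotomy to a strict inequality (which the full Simpson argument does give, via the strict monotonicity of the Donaldson functional along a nontrivial direction) or note that the borderline case $\deg E' = 0$ is also excluded because stability for a degree-zero bundle forces every proper $\varphi$-invariant subbundle to have strictly negative degree. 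With that clarification the sketch is correct.
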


Combining this with Remark \ref{remark:harmonic-hitchin},
we see that given a stable Higgs bundle and a parameter $R$,
we obtain a solution of Hitchin's equations \eqref{eq:hitchin-eq}
with parameter $R$.

\subsection{Real twistor lines}

Given a solution $(E, h, D, \varphi)$ of
Hitchin's equations \eqref{eq:hitchin-eq} with parameter $R$,
there is a corresponding family of flat non-unitary connections
in $E$, given by the formula
\begin{equation} \label{eq:hitchin-family}
  \nabla_\zeta = \zeta^{-1} R \varphi + D + \zeta R \varphi^{\dagger_h}, \qquad \zeta \in \C^\times.
\end{equation}
Indeed, the statement that $\nabla_\zeta$ is flat for all $\zeta \in \C^\times$
is equivalent to \eqref{eq:hitchin-eq}.
The family \eqref{eq:hitchin-family} is sometimes called a ``real twistor
line,'' because of the role it plays in the twistorial description of the
\hk metric on the moduli space of solutions of \eqref{eq:hitchin-eq}.

\subsection{The principal {$\fsl(2,\C)$}-triple}

Define
{\allowdisplaybreaks
\begin{align}
 H &=
        \begin{pmatrix} N-1 & & & & \\ & N-3 & & & \\ & & \ddots & & \\
      & & & -N+3 & \\ & & & & -N+1
    \end{pmatrix}, \label{eq:H-SLN} \\
 X_+ &= \begin{pmatrix} 0 & \sqrt{r_1} & & & \\
              &  0 & \sqrt{r_2} & & \\
              & & \ddots & \ddots& \\
               & & & 0& \sqrt{r_{N-1}}\\
                & & & & 0
              \end{pmatrix}, \label{eq:Xplus-SLN} \\
 X_- &= \begin{pmatrix} 0 &  & & & \\
              \sqrt{r_1}&  0 &  & & \\
              & \sqrt{r_{2}}& \ddots & & \\
               & & \ddots & 0& \\
                & & & \sqrt{r_{N-1}}& 0
              \end{pmatrix}, \label{eq:Xminus-SLN}
\end{align}}
\!\!where
\begin{equation}
	r_i = i(N-i).
\end{equation}
These make up an $\fsl(2,\C)$-triple:
\begin{equation} \label{eq:SL2-relations}
 [H, X_\pm] = \pm 2 X_\pm, \qquad [X_+, X_-]=H.
 \end{equation}

In addition, for each $n \ge 1$, choose (once and for all) a nonzero matrix $X_n$, such that only
the $ij$ entries with $j-i=n$ are nonzero (the $n^{\mathrm{th}}$ superdiagonal), or equivalently
\begin{equation} \label{eq:Xn-grade}
	[H, X_n] = 2n X_n,
\end{equation}
and also
\begin{equation} \label{eq:Xplus-commutator}
	[X_+, X_n] = 0.
\end{equation}
For example, when $N=4$ we could choose
\begin{equation}
  X_1 = \begin{pmatrix} 0 & \sqrt{3} & 0 & 0 \\ 0 & 0 & 2 & 0 \\ 0 & 0 & 0 & \sqrt{3} \\ 0 & 0 & 0 & 0 \end{pmatrix}, \quad   X_2 = \begin{pmatrix} 0 & 0 & 1 & 0 \\ 0 & 0 & 0 & 1 \\ 0 & 0 & 0 & 0 \\ 0 & 0 & 0 & 0 \end{pmatrix}, \quad  X_3 = \begin{pmatrix} 0 & 0 & 0 & 1 \\ 0 & 0 & 0 & 0 \\ 0 & 0 & 0 & 0 \\ 0 & 0 & 0 & 0 \end{pmatrix}.
\end{equation}
For later use we record a few facts, obtained by direct computation:
\begin{prop} \label{prop:Xn-facts-SLN} We have the following:
\begin{itemize}
\item The equations \eqref{eq:Xn-grade}, \eqref{eq:Xplus-commutator}
determine $X_n$ up to a scalar multiple for $n > 0$,
and the solution $X_n$ has the antidiagonal symmetry $(X_n)_{ij} = (X_n)_{N+1-j,N+1-i}$.
\item The equations \eqref{eq:Xn-grade}, \eqref{eq:Xplus-commutator}
determine $X_0$ to be a multiple of the identity.
\item The equations \eqref{eq:Xn-grade}, \eqref{eq:Xplus-commutator}
have only the solution $X_n = 0$ for $n < 0$.
\end{itemize}
\end{prop}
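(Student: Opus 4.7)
My plan is to reduce each bullet to a short calculation with the matrix entries. Since $\ad H$ acts on the $(i,j)$-entry of any matrix by the scalar $H_{ii} - H_{jj} = 2(j-i)$, the eigenvalue equation \eqref{eq:Xn-grade} forces $X_n$ to be supported exactly on the $n$-th superdiagonal when $n \ge 0$, and on the $|n|$-th subdiagonal when $n < 0$. Thus in each case the problem reduces to solving \eqref{eq:Xplus-commutator} on a one-dimensional family of nonzero entries.

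For $n > 0$, writing $a_i = (X_n)_{i,i+n}$ for $i = 1, \ldots, N-n$ and using $(X_+)_{k,k+1} = \sqrt{r_k}$, one finds
\begin{equation*}
  [X_+, X_n]_{i, i+n+1} = \sqrt{r_i}\, a_{i+1} - \sqrt{r_{i+n}}\, a_i,
\end{equation*}
so the commutator relation is the two-term recursion $\sqrt{r_i}\, a_{i+1} = \sqrt{r_{i+n}}\, a_i$ with strictly positive coefficients. This determines the $a_i$ uniquely up to the single scalar $a_1$, proving the first bullet. The same calculation for $n = 0$ collapses to $a_{i+1} = a_i$, forcing $X_0$ to be a scalar matrix, which is the second bullet. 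For $n < 0$, setting $m = -n$ and $b_i = (X_n)_{i,i-m}$, the analogous computation on the $m$-th subdiagonal produces, at the topmost relevant row $i = m$, a missing term from $X_n X_+$ (because the leftmost column of $X_+$ is zero). This yields the boundary relation $\sqrt{r_m}\, b_{m+1} = 0$, forcing $b_{m+1} = 0$, after which the ensuing recursion propagates the vanishing along the entire subdiagonal; this is the third bullet.

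The antidiagonal symmetry in the first bullet reduces to the identity $a_i = a_{N-n-i+1}$. Iterating the recursion gives $a_i/a_1 = \prod_{k=1}^{i-1}\sqrt{r_{k+n}/r_k}$, and the substitution $k \mapsto N-n-k$ combined with the palindromic identity $r_l = r_{N-l}$ (immediate from $r_l = l(N-l)$) reindexes the corresponding product for $a_{N-n-i+1}/a_1$ to match that for $a_i/a_1$. The main obstacle, modest but worth flagging, is the bookkeeping to locate the correct boundary obstruction in the $n < 0$ case. As a cleaner alternative, one could invoke the standard decomposition $M_N(\C) = \bigoplus_{k=0}^{N-1} V_{2k}$ under the principal $\fsl(2,\C)$-triple, from which all three bullets follow by reading off the weights appearing in $\ker(\ad X_+)$.
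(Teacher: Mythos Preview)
Your proof is correct and carries out exactly the ``direct computation'' the paper invokes but does not write down. The recursion $\sqrt{r_i}\,a_{i+1}=\sqrt{r_{i+n}}\,a_i$ you extract is the heart of the matter, and your handling of the boundary term in the $n<0$ case is right: at $i=m$ the term coming from $X_n X_+$ is absent (equivalently $r_0=0$), forcing $b_{m+1}=0$ and then the rest. The antidiagonal symmetry argument via the palindrome $r_l=r_{N-l}$ is also correct, though terse; one clean way to phrase what you are doing is that under $k\mapsto N-n-k$ the factor $r_{k+n}/r_k$ goes to its reciprocal $r_k/r_{k+n}$ over the same index range, so the product equals its own inverse and, being positive, equals $1$.

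Your closing remark is worth highlighting as a genuine alternative to the paper's bare-hands route: the decomposition $\fg l(N,\C)=\bigoplus_{k=0}^{N-1}V_{2k}$ under the principal $\fsl(2,\C)$ immediately gives $\dim\ker(\ad X_+)=N$ with weight vectors of $\ad H$-weights $0,2,\dots,2(N-1)$, which settles all three bullets at once (and the antidiagonal symmetry then follows because conjugation by $S$ is an $\fsl(2,\C)$-automorphism fixing the highest-weight line in each $V_{2k}$). This is arguably more conceptual and is essentially what the paper later uses implicitly in \S4 for general $G$; the entrywise computation has the virtue of being self-contained and of making the normalization of $X_n$ explicit.
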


\subsection{The Hitchin component} \label{sec:hitchin-component-SLN}

\begin{defn}
The \ti{Hitchin base} is the vector space
\begin{equation}
	 \cB = \bigoplus_{n=2}^{N} H^0(C, K_C^{n}).
\end{equation}
\end{defn}
We denote points of $\cB$ by
\begin{equation}
	\bu = (\phi_2, \dots, \phi_N).
\end{equation}

Now fix a spin structure on $C$, i.e.~a
holomorphic line bundle $\cL$ over $C$ equipped with an isomorphism
$\cL^2 \simeq K_C$.
Over each local coordinate chart $(U,z)$ on $C$,
$\cL$ has two distinguished trivializations corresponding to the
two square roots $\sqrt{\de z}$;
we choose one of these arbitrarily for each chart.
Then the transition map for $\cL$ between charts
$(U,z)$ and $(U',z')$ is
\begin{equation}
(z,s) \sim (z',s' = \alpha_{z,z'} s), \quad \mbox{where} \qquad \alpha_{z,z'} = \frac{\sqrt{\de z}}{\sqrt{\de z'}}.
\end{equation}

\begin{defn} \label{def:hitchin-component-SLN}
The \ti{Hitchin component}
is a set of stable $SL(N,\C)$-Higgs bundles
$(E, \bar\partial_E, \varphi_\bu)$, parameterized by $\bu \in \cB$,
as follows:
\begin{itemize}
 \item $E$ is the smooth vector bundle
 \begin{equation} \label{eq:E-SLN}
 	E = \cL^{N-1} \oplus \cL^{N-3} \oplus \cdots \oplus \cL^{-N+3} \oplus \cL^{-N+1}.
 \end{equation}
 Our distinguished local trivializations
 of $\cL$ induce distinguished local trivializations of $E$.
 Note that the exponents appearing in \eqref{eq:E-SLN}
 are the diagonal entries of the matrix $H$ from \eqref{eq:H-SLN}. Thus the
 transition maps between distinguished trivializations of $E$ are
\begin{equation} \label{eq:E0-transition-SLN}
	\alpha_{z,z'}^H = \begin{pmatrix} \alpha_{z,z'}^{N-1} & & & & \\ & \alpha_{z,z'}^{N-3} & & & \\ & & \ddots & & \\
      & & & \alpha_{z,z'}^{-N+3} & \\ & & & & \alpha_{z,z'}^{-N+1}
    \end{pmatrix} .
\end{equation}
 \item $\bar\partial_E$ is the holomorphic structure on $E$ induced from the
 one on $\cL$.
 \item Fix a chart $(U,z)$ and write $\phi_n = P_{n,z} \de z^n$.
 The Higgs field $\varphi_\bu \in \End E \otimes K_C$ is, relative
 to the distinguished local trivialization of $E$,
\begin{equation} \label{eq:higgs-field-SLN}
\varphi_{\bu,z} = \left(X_- + \sum_{n=1}^{N-1} P_{n+1,z} X_{n}\right) \de z.
\end{equation}
(Note that this indeed makes global sense, i.e.~$\alpha^H_{z,z'} \varphi_{\bu,z} \alpha^H_{z',z} = \varphi_{\bu,z'}$.)
\end{itemize}
\end{defn}

\begin{ex}
For $N=5$, (for one choice of normalizations of the $X_n$),
\begin{equation}
 \varphi_\bu = \begin{pmatrix} 0 & 2 P_2 &2 P_3 &P_4 &P_5 \\
2 & 0 & \sqrt{6} P_2 & \sqrt{6} P_3& P_4\\
0 & \sqrt{6}  & 0 & \sqrt{6} P_2 & 2P_3\\
0 & 0 & \sqrt{6}  & 0 & 2P_2\\
0 & 0 & 0 & 2 & 0
\end{pmatrix} \de z.
\end{equation}
Here and below, when working in a single coordinate
chart $(U,z)$, we sometimes drop the explicit
subscripts $z$ to reduce clutter.
Note that the characteristic polynomial of this matrix is
\begin{equation}
t^5 -20P_2 t^3 - 14 \sqrt{6} P_3 t^2 - (24P_4- 64 P_2^2) t -(24P_5-32\sqrt{6} P_2 P_3),
\end{equation}
so with our conventions, the $P_n$ are \ti{not} the coefficients of the characteristic polynomial, but
they both determine and can be recovered from these coefficients.
\end{ex}

When $N$ is even, the Hitchin component depends on the choice of spin structure.
When $N$ is odd, only even powers of $\cL$ appear, so in fact the Hitchin
component does not depend on the spin structure.

\subsection{The bilinear pairing}

The bundle $E$ given by \eqref{eq:E-SLN}
has a nondegenerate complex bilinear pairing $S$, i.e.~an isomorphism
$S: E \to E^*$, coming from the
fact that $\cL^{-n} = (\cL^n)^*$. In our distinguished trivializations
this is simply
\begin{equation}\label{eq:S}
S = \begin{pmatrix} & & & 1\\ & & 1 & \\
& \mbox{\reflectbox{$\ddots$}} & & \\ 1 & & & \end{pmatrix}.
\end{equation}
The antidiagonal symmetry of $X_{\pm}$ and the $X_n$ can be restated
as saying that they are self-adjoint with respect to $S$,
i.e.
\begin{equation}
S^{-1} X_n^T S = X_n
\end{equation}
and similarly for $X_\pm$. Thus, for any $\bu \in \cB$,
the Higgs field $\varphi_\bu$ of \eqref{eq:higgs-field-SLN} is also $S$-self-adjoint,
\begin{equation} \label{eq:higgs-S-self-adjoint}
S^{-1} \varphi_\bu^T S = \varphi_\bu.
\end{equation}
We define $\End_S E$ to be the subalgebra of traceless $S$-skew-adjoint
endomorphisms,
\begin{equation}
\End_S E = \{S: S^{-1} \chi^T S = -\chi \, \text{ and } \, \Tr S = 0\} \, \subset \, \End E.
\end{equation}
We then have
\begin{lem} \label{lem:vanishing-lemma-SLN}
If $\chi \in \End_S E$, then $[X_+, \chi] = 0$
if and only if $\chi = 0$.
\end{lem}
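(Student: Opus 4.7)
The plan is to decompose $\chi$ into eigenspaces of $\mathrm{ad}_H$ and apply Proposition \ref{prop:Xn-facts-SLN} together with the antidiagonal/$S$-adjoint symmetry.

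First I would observe that on $\End E$, the operator $\mathrm{ad}_H$ is diagonalizable with integer eigenvalues $2n$, $n \in \{-(N-1),\dots,N-1\}$: the $2n$-eigenspace consists of matrices whose only nonzero entries lie on the $n$-th superdiagonal. Thus I can write $\chi = \sum_n \chi_n$ with $[H,\chi_n] = 2n\,\chi_n$. Since $\mathrm{ad}_{X_+}$ raises $\mathrm{ad}_H$-weight by $2$, the equation $[X_+,\chi]=0$ decouples into $[X_+,\chi_n]=0$ for every $n$. Proposition \ref{prop:Xn-facts-SLN} then forces $\chi_n = 0$ for $n<0$, $\chi_0 = c\,\mathrm{Id}$ for some $c\in\C$, and $\chi_n = c_n X_n$ for $n>0$.

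Next I would eliminate each piece using the conditions defining $\End_S E$. Tracelessness of $\chi$ kills the diagonal component: $\Tr(\chi_0) = Nc = 0$, so $c=0$. For the upper-triangular components I would check that the $S$-adjoint operation $\chi \mapsto S^{-1}\chi^T S$ preserves $\mathrm{ad}_H$-weight. This is a short computation: $S$ exchanges the matrix units $E_{ij} \leftrightarrow E_{N+1-j,\,N+1-i}$, and these two matrix units have the same weight $2(j-i)$ under $\mathrm{ad}_H$. Consequently each weight component $\chi_n$ of an $S$-skew-adjoint matrix is itself $S$-skew-adjoint. On the other hand, Proposition \ref{prop:Xn-facts-SLN} records that $X_n$ has the antidiagonal symmetry $(X_n)_{ij} = (X_n)_{N+1-j,\,N+1-i}$, which says precisely that $X_n$ is $S$-self-adjoint. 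Therefore $c_n X_n$ is simultaneously $S$-self-adjoint and $S$-skew-adjoint, forcing $c_n=0$ for all $n>0$.

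Combining the two steps, every component $\chi_n$ vanishes, so $\chi=0$. The reverse implication is trivial. No step here looks genuinely hard; the only point that warrants care is the compatibility of $S$-adjunction with the $\mathrm{ad}_H$-grading, which is the matrix-unit check above. The main role of Proposition \ref{prop:Xn-facts-SLN} is to reduce the infinite-looking centralizer condition to a one-parameter family in each positive weight, at which point the $S$-skew-adjointness condition is enough to conclude.
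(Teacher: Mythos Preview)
Your proof is correct and follows essentially the same approach as the paper: both argue via Proposition \ref{prop:Xn-facts-SLN} that the centralizer of $X_+$ is spanned by the $S$-self-adjoint elements $X_0, X_1, \dots, X_{N-1}$, so an $S$-skew-adjoint (traceless) element commuting with $X_+$ must vanish. The only cosmetic difference is that you make the $\mathrm{ad}_H$-weight decomposition explicit and handle the $n=0$ piece via tracelessness, whereas the paper dispatches everything in one stroke by noting that $S$-self-adjoint and $S$-skew-adjoint together force $\chi=0$.
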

\begin{proof}
This follows directly from Proposition \ref{prop:Xn-facts-SLN}, which says that if $[X_+, \chi] = 0$, then
$\chi$ is a combination of $X_0,  X_1, \ldots, X_N$, and thus is $S$-self-adjoint.
\end{proof}

\subsection{The natural metric} \label{sec:natural-metric}

The Higgs bundle corresponding to the origin of $\cB$ is particularly
important:

\begin{defn}The \ti{uniformizing Higgs bundle} is
the element $(E,\bar\partial_E,\varphi_\bzero)$
of the Hitchin component, where
 $\bzero = (0,0,\dots,0) \in \cB.$
\end{defn}
Here is the reason for the name.
By the uniformization theorem, the conformal class
determined by the complex structure on $C$ contains a unique Riemannian
metric $g_\natural$ with constant curvature $-4$.
More generally, $g_\natural / R^2$ is the unique metric with
constant curvature $-4 R^2$.
This in turn induces a metric on $E$, as follows:
\begin{defn}
The \ti{natural metric} $h_{\natural}(R)$ on the bundle $E$ of \eqref{eq:E-SLN} is orthogonal with respect to the decomposition \eqref{eq:E-SLN},
and on $\cL^n \subset E$, is induced by $g_\natural / R^2$, i.e.,
\begin{equation}
 h_\natural(R) = R^n g_{\natural}^{-n/2}  \qquad \text{ on } \cL^n \subset E.
\end{equation}
We write $h_\natural$ for $h_\natural(R=1)$.
\end{defn}

Thus, viewing Hermitian metrics
as maps $E \to \overline{E}^*$, we have
\begin{equation} \label{eq:h-natural-R-dependence}
  h_\natural(R) = h_\natural \circ {R^H}.
\end{equation}

For future use we also describe $h_\natural(R)$ relative to the distinguished local trivializations of $E$.
In a local coordinate chart $(U,z)$, we can write
\begin{equation}
 g_\natural = \lambda_{\natural,z}^2 \de z \de \bar{z}, \quad \mbox{where} \quad
\partial_{\bar z} \partial_z \log \lambda_{\natural,z} - \lambda_{\natural,z}^2 = 0.
\label{eq:lambda-natural-de}
\end{equation}
Then
\begin{equation} \label{eq:h-natural-local}
h_{\natural,z}(R) = R^{H} \lambda_{\natural,z}^{-H} = \begin{pmatrix} R^{N-1} \lambda_{\natural,z}^{{1-N}} & & & & \\ & R^{N-3} \lambda_{\natural,z}^{{3-N}} & & & \\ & & \ddots & & \\
      & & & R^{3-N} \lambda_{\natural,z}^{{N-3}} & \\ & & & & R^{1-N} \lambda_{\natural,z}^{{N-1}}
    \end{pmatrix}.
\end{equation}

Note that $h_\natural(R)$ is compatible with $S$ in the sense that,
using $S$ to identify $E \simeq E^*$, the dual metric induced by
$h_\natural(R)$ is equal to $h_\natural(R)$ itself.
This is expressed concretely by the equation
\begin{equation} \label{eq:S-h-compatible}
  S^{-1} h_\natural(R)^T = h_\natural(R)^{-1} \bar{S}
\end{equation}
where both sides are maps $\bar{E} \to E$.
\eqref{eq:S-h-compatible}
is straightforward to check directly using \eqref{eq:S},
\eqref{eq:h-natural-local}.
It follows in particular that $S$ intertwines the Chern connections
on $E$ and $E^*$:
\begin{equation} \label{eq:S-cov-const}
  S^{-1} \circ (\bar\partial_E)^T \circ S = \bar\partial_E, \qquad S^{-1} \circ (\partial_E^{h_\natural})^T \circ S = \partial_E^{h_\natural}.
\end{equation}

The next proposition, from \cite{MR1174252}, explains the importance of $h_\natural(R)$ for our purposes:
\begin{prop} \label{prop:h-natural-harmonic}
The harmonic metric on the uniformizing Higgs bundle
$(E,\bar\partial_E,\varphi_\bzero)$ with parameter $R$
is $h_\natural(R)$.
\end{prop}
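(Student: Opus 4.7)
The plan is a direct local computation verifying that $h_\natural(R)$ satisfies the harmonic metric equation \eqref{eq:harmonic-metric} for the Higgs bundle $(E,\bar\partial_E,\varphi_\bzero)$; uniqueness then follows from Theorem \ref{thm:harmonic-existence}. Fix a local chart $(U,z)$ and use the distinguished trivialization of $E$ induced from the chosen square root of $\de z$. In this trivialization $\varphi_\bzero = X_-\,\de z$ by \eqref{eq:higgs-field-SLN}, and $h_\natural(R)$ is the diagonal matrix $R^H \lambda_{\natural,z}^{-H}$ by \eqref{eq:h-natural-local}.

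First, I would compute the adjoint $\varphi_\bzero^{\dagger_{h_\natural(R)}}$. Since $X_-^T = X_+$ and $h_\natural(R)$ is diagonal, one checks that
\begin{equation}
\varphi_\bzero^{\dagger_{h_\natural(R)}} = h_\natural(R)^{-1} X_+ \, h_\natural(R)\,\de\bar z = R^{-2}\lambda_{\natural,z}^2\, X_+\,\de\bar z,
\end{equation}
the diagonal factor $R^{-2}\lambda_{\natural,z}^2$ appearing because consecutive diagonal entries of $H$ differ by $2$. Using the $\fsl(2,\C)$-relation $[X_+,X_-]=H$ from \eqref{eq:SL2-relations}, I then get
\begin{equation}
R^2 \, [\varphi_\bzero,\varphi_\bzero^{\dagger_{h_\natural(R)}}] = -\lambda_{\natural,z}^2\, H \,\de z \wedge \de \bar z.
\end{equation}
Note that the parameter $R$ has dropped out, which is the whole point of the $R$-rescaling in the definition of $h_\natural(R)$.

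Next, I would compute the Chern curvature $F_{D_{h_\natural(R)}}$. Because $h_\natural(R)$ is diagonal, the connection $1$-form in the holomorphic frame is $h_\natural(R)^{-1}\partial h_\natural(R) = -H\,\partial_z\log\lambda_{\natural,z}\,\de z$, and so
\begin{equation}
F_{D_{h_\natural(R)}} = \bar\partial\bigl(h_\natural(R)^{-1}\partial h_\natural(R)\bigr) = H\,(\partial_z\partial_{\bar z}\log\lambda_{\natural,z})\,\de z\wedge \de\bar z.
\end{equation}
Invoking the Liouville equation \eqref{eq:lambda-natural-de}, which says precisely that $\partial_z\partial_{\bar z}\log\lambda_{\natural,z}=\lambda_{\natural,z}^2$, this becomes $H\,\lambda_{\natural,z}^2\,\de z\wedge \de\bar z$, canceling $R^2[\varphi_\bzero,\varphi_\bzero^{\dagger_{h_\natural(R)}}]$ identically.

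The identity is coordinate-independent, so it globalizes. It remains only to remark that the verification is already complete: all the work sits in matching the $\fsl(2)$-commutator $[X_+,X_-]=H$ against the Liouville equation for the hyperbolic metric, and the $R$-dependence is arranged so as to cancel automatically. There is no serious obstacle, since $\varphi_\bzero$ is principal nilpotent and $h_\natural(R)$ is diagonal; the only subtlety worth checking is that the uniformizing Higgs bundle is stable (so that Theorem \ref{thm:harmonic-existence} applies and yields uniqueness), which follows because no invariant subbundle can be $\varphi_\bzero$-preserved given the cyclic action of $X_-$ on the summands $\cL^{N-2i+1}$.
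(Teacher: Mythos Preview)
Your proof is correct and follows essentially the same approach as the paper: a direct local computation in the distinguished trivialization, matching the Chern curvature $(\partial_{\bar z}\partial_z\log\lambda_\natural)\,H$ against $[X_-,\lambda_\natural^2 X_+]=-\lambda_\natural^2 H$ via the Liouville equation. Your version is simply more detailed, and you make explicit the appeal to stability and Theorem~\ref{thm:harmonic-existence} for uniqueness, which the paper leaves implicit.
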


\begin{proof} We just compute directly in the distinguished trivializations:
\begin{align}
    F_{D_{h_\natural(R)}} + R^2 \left[\varphi_\bzero, \varphi_\bzero^{\dagger_{h_\natural(R)}} \right] &= \left( \partial_{\bar z} \partial_z \log (\lambda_\natural) H + \left[X_-, \lambda_\natural^2 X_+ \right]\right) \de z \wedge \de \bar{z} \\
    &= \left( \partial_{\bar z} \partial_z \log (\lambda_\natural) - \lambda_\natural^2 \right) H \, \de z \wedge \de \bar{z} \\
    &= 0.
\end{align}
\end{proof}

\subsection{{$SL(N,\C)$}-opers} \label{sec:opers-SLN}

We now recall the notion of \ti{$SL(N,\C)$-oper}:

\begin{defn} \label{def:oper-SLN}
An $SL(N,\C)$-oper on $C$ is a tuple $\left( E,\nabla , F_\bullet \right)$:
\begin{itemize}
\item A rank $N$ complex vector bundle $E$ over $C$, equipped with
a trivialization of the determinant bundle $\det E$,
\item A flat connection $\nabla$ on $E$,
\item A filtration $0 = F_0 \subset F_1 \subset \cdots \subset F_N = E$
of subbundles of $E$,
\end{itemize}
such that
\begin{itemize}
\item Each $F_n$ is holomorphic (with respect to the holomorphic
structure $\bar\partial_{\nabla}$),
\item If $\psi$ is a section of $F_n$ then $\nabla \psi$ lies in
the subbundle $F_{n+1} \otimes K_C \subset E \otimes K_C$,
\item The induced linear map
\begin{equation}
\bar \nabla: F_n / F_{n-1} \to F_{n+1} / F_n \otimes K_C
\end{equation}
is an isomorphism of line bundles, for $1 \le n \le N-1$.
\end{itemize}
\end{defn}

A flat holomorphic bundle $(E, \nabla)$ can admit at most one
filtration $F_\bullet$ satisfying the properties above.
Thus an $SL(N,\C)$-oper is a special sort of flat $SL(N,\C)$-bundle;
in fact, $SL(N,\C)$-opers form a holomorphic Lagrangian
subspace in the moduli space of flat $SL(N,\C)$-bundles.
However, we will not use this picture explicitly; our
constructions will produce the required filtrations directly in a local way.
For more background on opers see e.g.
\cite{MR2082709,Wentworth2014,dalakov-thesis}.

\subsection{A construction of {$SL(N,\C)$}-opers} \label{sec:oper-construction-SLN}

We now recall a construction of $SL(N,\C)$-opers which is
particularly convenient for our purposes. This construction has
its roots in the work of Drinfeld-Sokolov; see e.g. \cite{Zucchini:1994ik}
for a point of view close to ours.

We first describe a $1$-parameter family
of bundles $E_\hbar$ ($\hbar \in \C$), equipped
with holomorphic structures $\bar\partial_{E_\hbar}$
and holomorphic filtrations $F_{\hbar,\bullet}$.
Then for any $\bu \in \cB$ we will construct a corresponding $1$-parameter
family of connections $\nabla_{\hbar,\bu}$ ($\hbar \in \C^\times$),
compatible with the holomorphic structures and filtrations,
so that $(E_\hbar, \nabla_{\hbar,\bu}, F_{\hbar,\bullet})$ is a $1$-parameter
family of opers:

\begin{prop} \label{prop:oper-construction-SLN} We have the following:
\begin{itemize}
  \item For any $\hbar \in \C$,
the $SL(N,\C)$-valued transition functions
\begin{equation} \label{eq:Eh-transition-SLN}
	T_{\hbar,z,z'} = \alpha_{z,z'}^H \exp(\hbar \alpha_{z,z'}^{-1} \partial_z \alpha_{z,z'} X_+)
\end{equation}
define a holomorphic rank $N$ vector bundle
$(E_\hbar, \bar\partial_{E_\hbar})$ over $C$,
carrying a filtration $F_{\hbar,\bullet}$,
and equipped with a distinguished trivialization
for each local coordinate patch $(U,z)$ on $C$.

\item For any $\hbar \in \C^\times$ and $\bu \in \cB$,
there exists a canonical $SL(N,\C)$-oper
$(E_\hbar, \nabla_{\hbar,\bu}, F_{\hbar,\bullet})$,
compatible with the holomorphic structure $\bar\partial_{E_\hbar}$.
Relative to the distinguished trivializations of $E_\hbar$
on patches $(U,z)$ in the atlas
given by Fuchsian uniformization, $\nabla_{\hbar,\bu}$
is given by
\begin{equation} \label{eq:oper-local-SLN}
 \nabla_{\hbar,\bu,z} = \de + \hbar^{-1} \varphi_{\bu,z},
\end{equation}
where (as noted earlier)
\begin{equation} \label{eq:higgs-field-SLN-redux}
\varphi_{\bu,z} = \left(X_- + \sum_{n=1}^{N-1} P_{n+1,z} X_{n}\right) \de z.
\end{equation}

\end{itemize}
\end{prop}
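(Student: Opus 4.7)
My plan is to establish the cocycle and filtration structurally, then reduce the gauge-invariance of the local form $d + \hbar^{-1}\varphi_{\bu,z}$ to the vanishing of the Schwarzian of the transition map---which is precisely what the Fuchsian atlas supplies---and finally read off the oper axioms from the matrix structure of $X_-$ and the $X_n$.

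Write $\beta_{z,z'} = \alpha_{z,z'}^{-1}\partial_z\alpha_{z,z'}$. The identity $[H,X_+]=2X_+$ yields $\exp(tX_+)\alpha^H = \alpha^H\exp(t\alpha^{-2}X_+)$; substituting this into the product $T_{\hbar,z',z''}T_{\hbar,z,z'}$ and using $\alpha_{z,z''}=\alpha_{z,z'}\alpha_{z',z''}$ gives
\begin{equation}
T_{\hbar,z',z''}T_{\hbar,z,z'} = \alpha_{z,z''}^H\exp\bigl(\hbar(\beta_{z,z'}+\alpha_{z,z'}^{-2}\beta_{z',z''})X_+\bigr),
\end{equation}
so the cocycle condition reduces to $\beta_{z,z''}=\beta_{z,z'}+\alpha_{z,z'}^{-2}\beta_{z',z''}$, which follows directly from the chain rule applied to $\log\alpha_{z,z''}=\log\alpha_{z,z'}+\log\alpha_{z',z''}$ together with $\partial_z z' = \alpha_{z,z'}^{-2}$. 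Because $\alpha^H$ is diagonal and $\exp(\hbar\beta X_+)$ is upper unitriangular, the standard complete flag on $\C^N$ descends to a global filtration $F_{\hbar,\bullet}$ on $E_\hbar$. Since $\Tr H=0$ and $X_+$ is nilpotent, $\det T_{\hbar,z,z'}=1$, so the trivialization of $\det E_\hbar$ is preserved as well.

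For the second bullet I need to verify $\hbar^{-1}\varphi_{\bu,z'}=T\,\hbar^{-1}\varphi_{\bu,z}T^{-1}-(dT)T^{-1}$ with $T=T_{\hbar,z,z'}$, for Fuchsian transitions $z=f(z')$. The adjoint action $\Ad(\exp(\hbar\beta X_+))X_-=X_-+\hbar\beta H-\hbar^2\beta^2 X_+$ terminates after two commutators via $[X_+,X_-]=H$ and $[X_+,H]=-2X_+$; conjugating further by $\alpha^H$ and using $[H,X_n]=2nX_n$, $[X_+,X_n]=0$, together with $dz=\alpha^2 dz'$, the coefficients of $X_-$ and of each $X_n$ with $n\ge 2$ automatically match the transformation law $P_{n+1,z'}=(f'(z'))^{n+1}P_{n+1,z}$ of the holomorphic differentials, while the $H$-components cancel via the identity $\beta\alpha^2=\alpha^{-1}\partial_{z'}\alpha$. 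The only nontrivial condition, arising from the $X_+$-component, is $\partial_{z'}\beta+\alpha^2\beta^2=0$. Substituting $\beta=f''/(2(f')^2)$ and $\alpha^2=f'$ shows this equals $S(f)/(2f')$, where $S(f)=f'''/f'-\tfrac{3}{2}(f''/f')^2$ is the Schwarzian derivative. Since Möbius transformations satisfy $S(f)=0$, the compatibility holds on Fuchsian overlaps and $\nabla_{\hbar,\bu}$ is globally well defined. Flatness is then automatic, since $A=\hbar^{-1}\varphi_{\bu,z}$ is a holomorphic $(1,0)$-form on a complex curve, so $dA=0$ and $A\wedge A=0$.

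The oper axioms are local. Since $X_-e_i=\sqrt{r_i}\,e_{i+1}$ and each $X_n$ with $n\ge 1$ preserves $F_\bullet$, we have $\nabla_{\hbar,\bu}F_{\hbar,n}\subset F_{\hbar,n+1}\otimes K_C$; the induced map on graded pieces is multiplication by $\hbar^{-1}\sqrt{r_n}\,dz$, nowhere vanishing because $r_n=n(N-n)\ne 0$ for $1\le n\le N-1$, which gives the required isomorphism of line bundles. The main technical obstacle is the matching of $X_+$-contributions in the gauge transformation: the $-\hbar^2\beta^2 X_+$ produced by conjugating $X_-$, the $\hbar(\partial_{z'}\beta)\alpha^2 X_+$ contributed by $(dT)T^{-1}$, and the transformation of $P_{2,z}X_1$ must all combine correctly, and their cancellation is precisely the Schwarzian condition that singles out the Fuchsian atlas in the statement.
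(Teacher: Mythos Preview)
Your argument is correct. The main structural difference from the paper is in the cocycle step: the paper does not verify $\beta_{z,z''}=\beta_{z,z'}+\alpha_{z,z'}^{-2}\beta_{z',z''}$ directly, but instead fixes an auxiliary metric $g=\lambda_z^2\,dz\,d\bar z$, sets $M_{\hbar,z}=\exp(\hbar(\partial_z\log\lambda_z)X_+)$, and checks that $T_{\hbar,z,z'}=M_{\hbar,z'}\alpha_{z,z'}^H M_{\hbar,z}^{-1}$, from which the cocycle is immediate. This coboundary presentation is not merely cosmetic: the same $M_{\hbar,z}$ (with $\lambda=\lambda_\natural$) reappears later in the paper as the explicit gauge transformation identifying the limiting connection $\nabla_{0,\hbar,\bu}$ on $E$ with the oper $\nabla_{\hbar,\bu}$ on $E_\hbar$, so the paper's route buys a reusable ingredient. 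Your direct chain-rule verification is shorter and self-contained for the proposition at hand. For the connection compatibility, both arguments are essentially the same direct conjugation; the paper writes the residual $X_+$-coefficient as $-\hbar\bigl((\partial_z\alpha)^2+\alpha^2\partial_z^2\log\alpha\bigr)$ and kills it by plugging in $\alpha=\pm(cz+d)$, whereas you recognise it (after changing to the $z'$-variable) as $S(f)/(2f')$ and invoke the M\"obius-invariance of the Schwarzian---the same computation, made more conceptual. One small clean-up: since $X_1$ is a scalar multiple of $X_+$, your ``$n\ge 2$'' restriction is unnecessary---the $P_2X_1$ term already transforms correctly on its own and does not mix with the Schwarzian obstruction (the latter carries $\hbar$, the former $\hbar^{-1}$), so the closing remark about $P_{2,z}X_1$ participating in the cancellation can be dropped.
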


The remainder of this section is devoted to the proof of
Proposition \ref{prop:oper-construction-SLN}.

When $\hbar = 0$, $E_0$ is just the
bundle $E$ described by \eqref{eq:E-SLN},
with transition functions $\alpha_{z,z'}^H$ as given in
\eqref{eq:E0-transition-SLN}.
The transition functions $T_{\hbar,z,z'}$ for $E_\hbar$ are a deformation of this.
However, there is still something to check:
\begin{lem} The transition functions \eqref{eq:Eh-transition-SLN}
obey the cocycle condition
\begin{equation} \label{eq:Eh-cocycle-SLN}
	T_{\hbar,z,z''} = T_{\hbar,z',z''} T_{\hbar,z,z'}.
\end{equation}
\end{lem}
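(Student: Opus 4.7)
The plan is to verify the cocycle condition by a direct computation, separating the ``diagonal'' cocycle $\alpha_{z,z'}^H$ (which we know satisfies its own cocycle relation because it defines $E_0$) from the unipotent correction $\exp(\hbar \beta_{z,z'} X_+)$, where I will abbreviate $\beta_{z,z'} := \alpha_{z,z'}^{-1} \partial_z \alpha_{z,z'}$. The key algebraic input is the bracket $[H,X_+] = 2X_+$ from \eqref{eq:SL2-relations}, which conjugates scalar multiples of $X_+$ in a controlled way.

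First, I would work out how $\beta$ behaves under composition of coordinate changes. If $z' = f(z)$ and $z'' = g(z')$, then $\alpha_{z,z'} = f'(z)^{-1/2}$, so $\beta_{z,z'} = -\tfrac12 f''(z)/f'(z)$; the chain rule applied to $(g \circ f)''/(g \circ f)'$ gives
\begin{equation}
\beta_{z,z''} \;=\; \alpha_{z,z'}^{-2}\, \beta_{z',z''} + \beta_{z,z'},
\label{eq:beta-cocycle-plan}
\end{equation}
where $\beta_{z',z''}$ is viewed as a function of $z$ by $z' = f(z)$.

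Next I would expand the right-hand side of \eqref{eq:Eh-cocycle-SLN}. Write
\begin{equation*}
T_{\hbar,z',z''}\, T_{\hbar,z,z'} \;=\; \alpha_{z',z''}^H \exp(\hbar \beta_{z',z''} X_+)\, \alpha_{z,z'}^H \exp(\hbar \beta_{z,z'} X_+).
\end{equation*}
Insert $\alpha_{z,z'}^H \alpha_{z,z'}^{-H} = \Id$ to move $\alpha_{z,z'}^H$ to the left; since $[H,X_+] = 2X_+$, one has $\alpha_{z,z'}^{-H} X_+ \alpha_{z,z'}^H = \alpha_{z,z'}^{-2} X_+$, and therefore
\begin{equation*}
\alpha_{z,z'}^{-H} \exp(\hbar \beta_{z',z''} X_+) \alpha_{z,z'}^H \;=\; \exp\!\bigl(\hbar \beta_{z',z''} \alpha_{z,z'}^{-2} X_+\bigr).
\end{equation*}
Using the cocycle relation $\alpha_{z',z''}^H \alpha_{z,z'}^H = \alpha_{z,z''}^H$ and the fact that the two remaining exponentials both lie in the abelian subgroup generated by $X_+$, I can combine them:
\begin{equation*}
T_{\hbar,z',z''}\, T_{\hbar,z,z'} \;=\; \alpha_{z,z''}^H \exp\!\bigl(\hbar \bigl(\beta_{z',z''}\alpha_{z,z'}^{-2} + \beta_{z,z'}\bigr) X_+\bigr).
\end{equation*}
By \eqref{eq:beta-cocycle-plan} the exponent is $\hbar \beta_{z,z''} X_+$, so the product equals $T_{\hbar,z,z''}$, as required.

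The only real step that requires thought is the chain-rule identity \eqref{eq:beta-cocycle-plan}; everything else is formal manipulation with the abelian one-parameter subgroup $\exp(t X_+)$ and the grading action of $\alpha^H$. Once \eqref{eq:beta-cocycle-plan} is in hand, the rest is a single line.
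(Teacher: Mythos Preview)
Your argument is correct. The chain-rule identity you call \eqref{eq:beta-cocycle-plan} is exactly right (with $\alpha_{z,z'}^{-2}=f'(z)$), and once you have it the algebra with $\alpha^{-H}X_+\alpha^{H}=\alpha^{-2}X_+$ and the abelian one-parameter group $\exp(tX_+)$ goes through cleanly.

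The paper takes a different route: it fixes an auxiliary metric $g$, sets $f_z=\partial_z\log\lambda_z$ and $M_{\hbar,z}=\exp(\hbar f_z X_+)$, and shows directly that
\[
T_{\hbar,z,z'} \;=\; M_{\hbar,z'}\,\alpha_{z,z'}^H\,M_{\hbar,z}^{-1},
\]
from which the cocycle condition is immediate by telescoping. The underlying computation is the same conjugation identity you use, but packaged differently: the paper trades your chain-rule cocycle for $\beta$ for the transformation law of $f_z$ under coordinate change. Your approach is more self-contained (no auxiliary metric), while the paper's yields the matrices $M_{\hbar,z}$ as a byproduct --- and these are not throwaway: exactly this $M_{\hbar,z}$ (with $\lambda=\lambda_\natural$) reappears later as the gauge transformation identifying the limiting connection $\nabla_{0,\hbar,\bu}$ with the oper $\nabla_{\hbar,\bu}$.
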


\begin{proof} We will exhibit an alternative representation
\begin{equation} \label{eq:T-cocycle-alternative}
	T_{\hbar,z,z'} = M_{\hbar,z'} \alpha_{z,z'}^H M^{-1}_{\hbar,z},
\end{equation}
from which the cocycle condition \eqref{eq:Eh-cocycle-SLN} is immediate.

Fix some metric $g$ on $C$, represented locally as
\begin{equation}
	g = \lambda_z^2 \, \de z \de \bar{z},
\end{equation}
and let
\begin{equation}
	f_z = \partial_z \log \lambda_z.
\end{equation}
Then $\lambda_{z'} = \lambda_z \abs{\alpha_{z,z'}}^2$, whence
\begin{equation}\label{eq:transf-fz}
	f_{z'} = (\partial_z \alpha_{z,z'}) \alpha_{z,z'} + \alpha_{z,z'}^2 f_z.
\end{equation}

Now define
\begin{equation}
	M_{\hbar,z} = \exp(\hbar f_z X_+).
\end{equation}
Then we compute directly
\begin{align}
	M_{\hbar,z'} \alpha_{z,z'}^H M^{-1}_{\hbar,z} &= \exp(\hbar f_{z'} X_+) \alpha_{z,z'}^H \exp(-\hbar f_z X_+)  \\
	&= \alpha_{z,z'}^H \exp(\hbar f_{z'} \alpha_{z,z'}^{-2} X_+) \exp(-\hbar f_z X_+) \\
	&= \alpha_{z,z'}^H \exp(\hbar \alpha^{-1}_{z,z'} \partial_z \alpha_{z,z'} X_+) \\
	&= T_{\hbar,z,z'}
\end{align}
where the second equality uses the relation
\begin{equation} \label{eq:conjugation-Xplus}
 \alpha^{-H} X_+ \alpha^{H} = \alpha^{-2} X_+
\end{equation}
obtained by exponentiating \eqref{eq:SL2-relations}, and the third uses \eqref{eq:transf-fz}.
\end{proof}

We have now shown that the transition functions $T_{\hbar,z,z'}$
determine a vector bundle $E_\hbar$ over $C$. Moreover, the $T_{\hbar,z,z'}$
are holomorphic,
so $E_\hbar$ has a holomorphic structure $\bar\partial_{E_\hbar}$, represented by $\bar\partial$
in the distinguished local trivializations. (In other words, the distinguished local
trivializations are holomorphic.)  Note also that $T_{\hbar,z,z'}$ is an upper-triangular matrix, so it preserves
the filtration $F_{\hbar,\bullet}$, where $F_{\hbar,n}$ is spanned by the first $n$ basis vectors, and this filtration is defined globally.

Although $E_0 \not \simeq E_{\hbar}$ when $\hbar \neq 0$, all the other $E_\hbar$ are isomorphic:
\begin{prop}
For any $\lambda \in \C^\times$ and $\hbar \in \C$,
there is an isomorphism $E_\hbar \simarrow E_{\lambda^2 \hbar}$
given by $\lambda^H$ in the distinguished local trivializations.
\end{prop}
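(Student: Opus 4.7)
The plan is a direct verification that the locally-defined maps $\lambda^H$ glue to a global bundle isomorphism with the claimed target. Since $\lambda^H$ is constant in each distinguished trivialization, it is automatically holomorphic, invertible (with inverse $\lambda^{-H}$), and unimodular (since $\Tr H = 0$ by inspection of \eqref{eq:H-SLN}), so the only thing to check is compatibility with transition functions: namely, that
\begin{equation*}
\lambda^H \, T_{\hbar,z,z'} = T_{\lambda^2 \hbar, z, z'} \, \lambda^H
\end{equation*}
on every double overlap $(U,z) \cap (U',z')$.

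The key step is the computation
\begin{equation*}
\lambda^H \, T_{\hbar,z,z'} \, \lambda^{-H}
= \lambda^H \, \alpha_{z,z'}^H \exp\!\bigl(\hbar \, \alpha_{z,z'}^{-1} \partial_z \alpha_{z,z'} \, X_+\bigr) \, \lambda^{-H}.
\end{equation*}
Since $\lambda^H$ commutes with $\alpha_{z,z'}^H$ (they are both functions of $H$), this equals
\begin{equation*}
\alpha_{z,z'}^H \, \exp\!\bigl(\hbar \, \alpha_{z,z'}^{-1} \partial_z \alpha_{z,z'} \, \lambda^H X_+ \lambda^{-H}\bigr).
\end{equation*}
The relation $[H, X_+] = 2 X_+$ from \eqref{eq:SL2-relations} exponentiates to $\lambda^H X_+ \lambda^{-H} = \lambda^2 X_+$, so the exponent becomes $\lambda^2 \hbar \, \alpha_{z,z'}^{-1} \partial_z \alpha_{z,z'} X_+$, producing precisely $T_{\lambda^2 \hbar, z, z'}$.

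There is no real obstacle here: the argument is identical in structure to the cocycle calculation already carried out in the paper, and it reduces to the single $\fsl(2,\C)$-weight identity $\lambda^H X_+ \lambda^{-H} = \lambda^2 X_+$. The factor of $\lambda^2$ rather than $\lambda$ is exactly the reason the source and target parameters differ by $\lambda^2$. Since the maps $\lambda^H$ in the distinguished trivializations respect the transition cocycle in the required way, they assemble into a global map $E_\hbar \to E_{\lambda^2 \hbar}$, which is an isomorphism with inverse assembled from $\lambda^{-H}$.
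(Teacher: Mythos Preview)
Your proof is correct and follows essentially the same approach as the paper: both reduce the claim to the single identity $\lambda^H T_{\hbar,z,z'} = T_{\lambda^2\hbar,z,z'}\,\lambda^H$, which is an immediate consequence of the conjugation relation $\lambda^H X_+ \lambda^{-H} = \lambda^2 X_+$ (the paper's \eqref{eq:conjugation-Xplus}). Your added remarks on holomorphicity, invertibility, and unimodularity of $\lambda^H$ are correct but not strictly needed for the bare statement.
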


\begin{proof} We simply note that by \eqref{eq:conjugation-Xplus} and \eqref{eq:Eh-transition-SLN},
 $\lambda^H T_{z,z',\hbar} = T_{z,z',\lambda^2 \hbar} \lambda^{H}$.
\end{proof}

We now finally describe the connection $\nabla_{\hbar,\bu}$ on $E_\hbar$. For this purpose it is convenient to restrict the
choice of coordinate systems. We fix a complex projective structure on $C$,
i.e.~an atlas of coordinate charts $(U,z)$
with coordinates differing by M\"obius transformations,
\begin{equation} \label{eq:mobius}
	z' = \frac{az + b}{cz + d}, \qquad ad-bc=1.
\end{equation}
The particular complex projective structure we choose
is the one coming from Fuchsian uniformization, i.e.~the
realization of $C$ as a
quotient of the upper half-plane by a subgroup $\Gamma \subset SL(2,\R)$.
Now we can check:
\begin{lem} The formula \eqref{eq:oper-local-SLN} defines
a global connection in $E_\hbar$.
\end{lem}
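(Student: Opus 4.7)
The plan is to verify that the local formula $A_z := \hbar^{-1}\varphi_{\bu,z}$ transforms correctly under the prescribed transition functions $T := T_{\hbar,z,z'}$, i.e., that
\[
A_{z'} = T A_z T^{-1} - (dT)T^{-1}
\]
whenever $z, z'$ are related by a M\"obius transformation coming from the Fuchsian atlas. Writing $T = \alpha^H \exp(\hbar g X_+)$ with $\alpha := \alpha_{z,z'}$ and $g := \alpha^{-1}\partial_z\alpha$, the first step is to compute $T\varphi_{\bu,z} T^{-1}$ using only the $\fsl(2,\C)$ relations \eqref{eq:SL2-relations} together with $[X_+, X_n]=0$ from \eqref{eq:Xplus-commutator}. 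The key identities are
\[
e^{\hbar g X_+} X_- e^{-\hbar g X_+} = X_- + \hbar g H - (\hbar g)^2 X_+, \qquad e^{\hbar g X_+} X_n e^{-\hbar g X_+} = X_n,
\]
followed by the rescaling $\alpha^H X_\pm \alpha^{-H} = \alpha^{\pm 2} X_\pm$ and $\alpha^H X_n \alpha^{-H} = \alpha^{2n} X_n$, while $H$ commutes with $\alpha^H$. The second step is to compute $(dT)T^{-1}$ directly; since everything is holomorphic, this gives
\[
(dT)T^{-1} = \bigl[g H + \hbar (\partial_z g) \alpha^2 X_+\bigr]\,\de z.
\]

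Combining these two computations using $\de z = \alpha^2 \de z'$ and the transformation rule $P_{n+1,z'} = \alpha^{2(n+1)} P_{n+1,z}$ for the coefficients of the $(n+1)$-differential $\phi_{n+1}$, the $X_-$ and $X_n$ parts of $TA_zT^{-1}$ match precisely the $X_-$ and $X_n$ parts of $A_{z'}$, the $H$ contributions from $TA_zT^{-1}$ and $(dT)T^{-1}$ cancel, and the remaining $X_+$ discrepancy is
\[
-\hbar \alpha^4 (g^2 + \partial_z g) X_+ \, \de z' = -\hbar \alpha^3 (\partial_z^2 \alpha) X_+ \, \de z',
\]
where the last equality uses the elementary identity $g^2 + \partial_z g = \alpha^{-1}\partial_z^2 \alpha$.

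The final step is to observe that for a M\"obius change of coordinate $z' = (az+b)/(cz+d)$ with $ad-bc=1$ one has $\alpha_{z,z'} = cz+d$, hence $\partial_z^2 \alpha = 0$. The anomalous $X_+$ term therefore vanishes, the gauge law holds, and \eqref{eq:oper-local-SLN} descends to a global connection on $E_\hbar$. The main technical point, and the only place where the projective (Fuchsian) atlas is essential, is the cancellation of this $X_+$ anomaly, which is a manifestation of the vanishing of the Schwarzian derivative for M\"obius changes of coordinate; in a general holomorphic atlas the term $\partial_z^2 \alpha$ would be nonzero and no globally well-defined connection would result.
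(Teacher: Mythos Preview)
Your proof is correct and follows essentially the same route as the paper: both conjugate the local connection by $T_{\hbar,z,z'}$, use the $\fsl(2,\C)$ relations and $[X_+,X_n]=0$ to handle the $X_-$, $H$, and $X_n$ pieces, and isolate an $X_+$ anomaly that vanishes for M\"obius changes of coordinate. Your error term $-\hbar\alpha^3(\partial_z^2\alpha)X_+\,\de z'$ is algebraically equivalent to the paper's $-\hbar\bigl((\partial_z\alpha)^2+\alpha^2\partial_z^2\log\alpha\bigr)X_+\,\de z$, and both vanish for $\alpha=\pm(cz+d)$.
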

\begin{proof} We must check that
\begin{equation}
	T_{\hbar,z,z'} \circ \nabla_{\hbar,\bu,z} \circ T_{\hbar,z,z'}^{-1} = \nabla_{\hbar,\bu,z'}
\end{equation}
when $z$ and $z'$ are related by \eqref{eq:mobius}.
We compute the LHS directly, writing $\alpha = \alpha_{z,z'}$ for simplicity. It is a sum of three terms. The first is
\begin{align}
& \hphantom{{}={}} \alpha^H \exp(\hbar \alpha^{-1} \partial_z \alpha X_+) \circ \de \circ \exp(-\hbar \alpha^{-1} \partial_z \alpha X_+) \alpha^{-H} \\
& = \de + \left[ \alpha^H \left((- \hbar \partial_z^2 \log \alpha) X_+\right)\alpha^{-H}  + (-\partial_z \log \alpha) H \right] \de z \\
& = \de + \left[(- \hbar \alpha^2 \partial_z^2 \log \alpha) X_+ + (-\partial_z \log \alpha) H\right] \de z.
\end{align}
Next is
\begin{align}
& \hphantom{{}={}} \hbar^{-1} \alpha^H \exp(\hbar \alpha^{-1} \partial_z \alpha X_+) X_- \exp(-\hbar \alpha^{-1} \partial_z \alpha X_+) \alpha^{-H} \\
& = \hbar^{-1} \alpha^H (X_- + (\hbar \partial_z \log \alpha) [X_+,X_-] + \half (\hbar \partial_z \log \alpha)^2 [X_+,[X_+,X_-]]) \alpha^{-H} \\
& =  \hbar^{-1} \alpha^H (X_- + (\hbar \partial_z \log \alpha) H - (\hbar \partial_z \log \alpha)^2 X_+) \alpha^{-H} \\
& =  \hbar^{-1} \alpha^{-2} X_- + (\partial_z \log \alpha) H - \hbar \alpha^2 (\partial_z \log \alpha)^2 X_+.
\end{align}
The transformation for the other terms in $\hbar^{-1} \varphi_{\bu,z}$ is simpler since they commute with $X_+$,
and we obtain
\begin{align}
& \hphantom{{}={}} \hbar^{-1} \alpha^H \exp(\hbar \alpha^{-1} \partial_z \alpha X_+) \left(\sum_{n=1}^{N-1} P_{n+1,z} X_{n}\right) \exp(-\hbar \alpha^{-1} \partial_z \alpha X_+) \alpha^{-H} \\
& = \hbar^{-1} \alpha^H \left(\sum_{n=1}^{N-1} P_{n+1,z} X_{n}\right) \alpha^{-H} \\
& = \hbar^{-1} \sum_{n=1}^{N-1} P_{n+1,z} \alpha^{2n} X_{n}.
\end{align}
Combining all these terms, the terms proportional to $H$ cancel nicely and
we get the desired result $\nabla_{\hbar,\bu,z'}$, \ti{except} for an extra term $\varepsilon X_+$, where
\begin{equation} \label{eq:error-term-SLN}
	\varepsilon = - \hbar ((\partial_z \alpha)^2 + \alpha^2 \partial_z^2 \log \alpha).
\end{equation}
It is precisely at this point where we have to use the restriction of the coordinate atlas. Indeed, for the transformations
\eqref{eq:mobius},
\begin{equation}
	\alpha = \pm (cz + d),
\end{equation}
and \eqref{eq:error-term-SLN} vanishes in this case.
\end{proof}

Finally note that the explicit formulas
\eqref{eq:oper-local-SLN} and \eqref{eq:Xminus-SLN} say
$\nabla_{\hbar,\bu,z}$ has nowhere-vanishing
entries on the first subdiagonal, and all entries
below this subdiagonal vanish. This is equivalent to
saying that $\nabla_{\hbar,\bu}$ obeys the
last condition in Definition \ref{def:oper-SLN}, and completes
the proof that $(E_\hbar, \nabla_{\hbar,\bu}, F_{\hbar,\bullet})$
is an $SL(N,\C)$-oper, thus completing the proof
of Proposition \ref{prop:oper-construction-SLN}.

\subsection{{$SL(N,\C)$}-opers and differential operators} \label{sec:SLN-opers-diff-ops}

This section is not used directly in the rest of the paper.
Its purpose is to recall the
sense in which $SL(N,\C)$-opers are equivalent to certain $N$-th
order linear scalar differential operators.
This construction is
well known; we include it here just to spell out its
relation with the description of opers in Proposition
\ref{prop:oper-construction-SLN} as connections on $E_{\hbar}$.

When $\psi$ is a holomorphic section of $\cL^{1-N}$,
let $\psi^{[N-1]}$ denote the $(N-1)$-jet of $\psi$,
which is a holomorphic section of the jet bundle $J^{N-1}(\cL^{1-N})$.
\begin{prop}
Fix $\bu \in \cB$.
There exists a unique holomorphic isomorphism
\begin{equation}
  \Phi_\bu: J^{N-1}(\cL^{1-N}) \simarrow E_\hbar
\end{equation}
such that:
\begin{itemize}
\item For any $\psi$, $\nabla_{\hbar,\bu} (\Phi_\bu(\psi^{[N-1]}))$ is valued
in the holomorphic line subbundle
$\cL^{N-1} \otimes K_C \simeq \cL^{N+1}$
of $E_\hbar \otimes K_C$,
\item The diagram
\smallskip
\begin{center}
\includegraphics{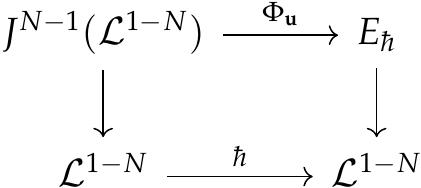}
\end{center}
\smallskip
is commutative, where the left vertical arrow is the projection to
the $0$-jet, and the right arrow is the quotient by $F_{N-1}$,
given in distinguished local trivializations by taking the
last component.
\end{itemize}
The map
\begin{equation}
  \cD_{\hbar,\bu}: \psi \mapsto \nabla_{\hbar,\bu} (\Phi_\bu(\psi^{[N-1]}))
\end{equation}
is a linear differential operator of order $N$, mapping
$\cL^{1-N} \to \cL^{N+1}$.
\end{prop}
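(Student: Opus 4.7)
The plan is to build $\Phi_\bu$ locally from a recursion dictated by the oper form of $\nabla_{\hbar,\bu}$. Work in a distinguished trivialization on a chart $(U,z)$ and write a local section of $E_\hbar$ as $s = (s_1,\dots,s_N)^T$. Because $X_-$ sits strictly on the subdiagonal and each $X_n$ ($n \ge 1$) lies on the $n$-th superdiagonal, the condition that $\nabla_{\hbar,\bu} s$ be valued in $F_1 \otimes K_C = \cL^{N-1} \otimes K_C$ --- equivalently that $(\nabla_{\hbar,\bu}s)_k = 0$ for $k=2,\dots,N$ --- reads
\[
\sqrt{r_{k-1}}\, s_{k-1} \;=\; -\hbar\, \partial_z s_k \;-\; \sum_{n=1}^{N-k} P_{n+1,z}\,(X_n)_{k,k+n}\, s_{k+n}, \qquad k=2,\dots,N.
\]
Imposing $s_N = \psi$, which corresponds to the requirement that $s$ project to $\psi$ under $E_\hbar \to E_\hbar/F_{N-1} \simeq \cL^{1-N}$, and running the recursion upward determines $s_{N-1}, \ldots, s_1$ uniquely. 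An immediate induction shows that $s_{N-j}$ is a linear differential operator of order exactly $j$ in $\psi$, with leading coefficient $(-\hbar)^j / \sqrt{r_{N-1} \cdots r_{N-j}} \neq 0$ (using $r_i = i(N-i) > 0$). Thus $s$ depends only on the $(N-1)$-jet of $\psi$, so the assignment $\psi^{[N-1]} \mapsto s$ factors through a local holomorphic bundle map $\Phi_{\bu,z} : J^{N-1}(\cL^{1-N})|_U \to E_\hbar|_U$.

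Uniqueness and globality are then immediate. The defining properties --- that $\nabla_{\hbar,\bu}\Phi_\bu(\psi^{[N-1]})$ take values in the holomorphic subbundle $F_1 \otimes K_C$ and that $\Phi_\bu(\psi^{[N-1]})$ project to $\psi$ under $E_\hbar \to E_\hbar/F_{N-1}$ --- are coordinate-independent, so any two maps obeying both must agree in every distinguished chart. Hence the local maps $\Phi_{\bu,z}$ glue to a unique global holomorphic map $\Phi_\bu$. To see that $\Phi_\bu$ is an isomorphism, pass to associated graded objects: the order-of-vanishing filtration on $J^{N-1}(\cL^{1-N})$ has graded pieces $\mathrm{gr}^j \simeq K_C^{\otimes j} \otimes \cL^{1-N} \simeq \cL^{2j+1-N}$, while the oper filtration has graded pieces $F_{N-j}/F_{N-j-1} \simeq \cL^{N+1-2(N-j)} = \cL^{2j+1-N}$, and the recursion identifies these two line bundles via a nonzero scalar at each level $j$. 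A filtered bundle map that is an isomorphism on every graded piece is itself an isomorphism, so $\Phi_\bu$ is a holomorphic bundle isomorphism.

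Finally, $\cD_{\hbar,\bu}(\psi) = \nabla_{\hbar,\bu} \Phi_\bu(\psi^{[N-1]})$ has, by construction, only its top component nonzero in a distinguished trivialization, namely
\[
\left(\partial_z s_1 + \hbar^{-1}\sum_{n=1}^{N-1} P_{n+1,z}(X_n)_{1,1+n}\, s_{1+n}\right)\de z,
\]
which is a section of $\cL^{N-1} \otimes K_C \simeq \cL^{N+1}$. Since $s_1$ is an order-$(N-1)$ operator in $\psi$ with nowhere-vanishing top coefficient, $\partial_z s_1$ contributes an order-$N$ term whose leading coefficient is not cancelled by the lower-order corrections $s_{1+n}$, so $\cD_{\hbar,\bu} : \cL^{1-N} \to \cL^{N+1}$ is a linear differential operator of order exactly $N$. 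There is no serious analytic obstacle; the step requiring the most care is the bookkeeping for the line-bundle exponents on the graded pieces and the verification that the recursive definition of $\Phi_\bu$ is compatible with the transition functions \eqref{eq:Eh-transition-SLN} of $E_\hbar$. The latter compatibility is automatic from the already-established globality of $\nabla_{\hbar,\bu}$ as an oper connection on $E_\hbar$, since both defining conditions on $\Phi_\bu$ are phrased purely in terms of $\nabla_{\hbar,\bu}$ and the filtration $F_{\hbar,\bullet}$.
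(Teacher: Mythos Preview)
Your proposal is correct and is a faithful formalization of the paper's approach: the paper does not give a formal proof of this proposition but only illustrates the construction for $N=2$ and $N=3$, writing out exactly the recursion you describe (solving $(\nabla_{\hbar,\bu}s)_k=0$ for $k\ge 2$ upward from the last component). One minor normalization discrepancy: in the paper's examples the last component of $\Phi_\bu(\psi^{[N-1]})$ is $\hbar\psi$ rather than $\psi$ --- the bottom arrow in the commutative diagram is multiplication by $\hbar$ --- but this is invisible to you without the figure and does not affect the substance of your argument.
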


This becomes much more concrete when we write
$\Phi_\bu$ relative to the distinguished
local trivializations in the Fuchsian atlas.
For instance, when $N=2$, we have
\begin{equation} \label{eq:Phi-SL2}
  \left[\partial_z + \hbar^{-1} \begin{pmatrix} 0 & P_2 \\ 1 & 0 \end{pmatrix}\right] \begin{pmatrix} - \hbar^2 \psi_z' \\ \hbar \psi_z \end{pmatrix} = \begin{pmatrix} -\hbar^2 \psi_z'' + P_2 \psi_z \\ 0 \end{pmatrix}.
\end{equation}
This equation implies that
\begin{equation}
  \Phi_\bu(\psi^{[1]}) = \begin{pmatrix} -\hbar^2 \psi_z' \\ \hbar \psi_z \end{pmatrix}.
\end{equation}
The $0$ in the bottom component of the RHS
of \eqref{eq:Phi-SL2} says $\nabla_{\hbar,\bu} (\Phi_\bu(\psi^{[1]}))$ is valued in the subbundle
$\cL \otimes K_C$; this condition determines $\Phi_\bu$ up to a constant
multiple which is fixed by requiring
that the bottom component of $\Phi_\bu(\psi^{[1]})$ is exactly $\hbar \psi_z$.
Thus we can read off from the top component of the RHS
of \eqref{eq:Phi-SL2} that $\cD_{\hbar,\bu}$ is represented locally by
\begin{equation}
  \cD_{\hbar,\bu,z} = -\hbar^2 \partial_z^2 + P_2.
\end{equation}

Similarly, for $N=3$, the analogue of \eqref{eq:Phi-SL2} is
\begin{equation}
  \left[\partial_z + \hbar^{-1} \begin{pmatrix} 0 & \sqrt{2} P_2 & P_3 \\ \sqrt{2} & 0 & \sqrt{2} P_2 \\ 0 & \sqrt{2} & 0 \end{pmatrix}\right] \begin{pmatrix} \frac{\hbar^3}{2} \psi_z'' - \hbar P_2 \psi_z \\ - \frac{\hbar^2}{\sqrt{2}} \psi_z' \\ \hbar \psi_z \end{pmatrix} = \begin{pmatrix} \frac{\hbar^3}{2} \psi_z''' - 2 \hbar P_2 \psi_z' - \hbar P'_2 \psi_z + P_3 \psi_z \\ 0 \\ 0 \end{pmatrix}
\end{equation}
which says that we have
\begin{equation} \label{eq:Phi-SL3}
  \Phi_\bu(\psi^{[2]}) = \begin{pmatrix} \frac{\hbar^3}{2} \psi_z'' - \hbar P_2 \psi_z \\ - \frac{\hbar^2}{\sqrt{2}} \psi_z' \\ \hbar \psi_z \end{pmatrix}
\end{equation}
and
\begin{equation} \label{eq:D-SL3}
  \cD_{\hbar,\bu,z} = \frac{\hbar^3}{2} \partial_z^3 - 2 \hbar P_2 \partial_z  - \hbar P'_2 + P_3.
\end{equation}
Note that $\Phi_\bu$ depends on $\bu$, through the $P_2$ in \eqref{eq:Phi-SL3}.
The $\bu$ dependence of $\cD_{\hbar,\bu}$ is thus more complicated
than one might naively guess: we already see that $P_2'$ appears in \eqref{eq:D-SL3},
despite the fact that only the $P_n$ and not their derivatives appear in the formula  \eqref{eq:oper-local-SLN} defining
$\nabla_{\hbar,\bu}$.

\section{The scaling limit, for {$G = SL(N,\C)$}} \label{sec:gaiotto-SLN}

\subsection{The main theorem for {$G = SL(N,\C)$}}

Fix an $SL(N,\C)$-Higgs bundle $(E, \bar\partial_E, \varphi)$ on $C$.  We then have the 2-parameter
family \eqref{eq:hitchin-family} of flat connections in $E$, depending on
$\zeta \in \C^\times$ and $R \in \R^+$, where we take
$h(R)$ to be the harmonic metric guaranteed by Theorem
\ref{thm:harmonic-existence}, and $D = D_{h(R)}$:
\begin{equation} \label{eq:two-param-family}
  \nabla_{R,\zeta} = \zeta^{-1} R \varphi + D_{h(R)} + \zeta R \varphi^{\dagger_{h(R)}}.
\end{equation}
We are going to consider the limits of certain 1-parameter subfamilies
of \eqref{eq:two-param-family},
obtained by taking $R \to 0$ and $\zeta \to 0$ simultaneously
while holding their ratio fixed.
In other words, fix some $\hbar \in \C^\times$ and set $\zeta = R \hbar$: then
\eqref{eq:two-param-family} becomes
\begin{equation} \label{eq:hitchin-family-rescaled}
  \nabla_{R,\hbar} = \hbar^{-1} \varphi + D_{h(R)} + \hbar R^2 \varphi^{\dagger_{h(R)}}.
\end{equation}
In \cite{Gaiotto2014}, Gaiotto proposed (and gave considerable
evidence for):

\begin{conj} \label{conj:gaiotto}
Suppose the $SL(N,\C)$-Higgs bundle $(E, \bar\partial_E, \varphi)$ is in the
Hitchin component, and fix some $\hbar \in \C^\times$.
Then as $R \to 0$ the connections $\nabla_{R,\hbar}$
converge to a connection $\nabla_{0,\hbar}$ in $E$, and there
exists a filtration $F_{\bullet}$
in $E$ such that $(E, \nabla_{0,\hbar}, F_{\bullet})$ is an $SL(N,\C)$-oper.
\end{conj}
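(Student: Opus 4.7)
The plan is to exhibit an $R$-dependent gauge transformation $g(R)$ of $E$ such that, in the distinguished trivializations of $E_\hbar$ from Proposition \ref{prop:oper-construction-SLN}, we have $g(R) \cdot \nabla_{R,\hbar} \to \nabla_{\hbar,\bu}$ as $R \searrow 0$. The filtration $F_\bullet$ required by Conjecture \ref{conj:gaiotto} will then be the pullback of $F_{\hbar,\bullet}$ under $g(R)$, and the three oper conditions of Definition \ref{def:oper-SLN} will follow from the explicit form \eqref{eq:oper-local-SLN} of $\nabla_{\hbar,\bu}$ (in particular, the subdiagonal of $\varphi_\bu$ is $X_-$, which is invertible on graded pieces).

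The main analytic task is to show that the harmonic metric $h(R)$ on $(E,\bar\partial_E,\varphi_\bu)$ is well approximated by the natural metric $h_\natural(R)$ from Proposition \ref{prop:h-natural-harmonic} as $R \searrow 0$. I would write $h(R) = h_\natural(R)\,e^{2\psi_R}$, with $\psi_R$ an $h_\natural(R)$-self-adjoint, traceless endomorphism of $E$, and substitute into \eqref{eq:harmonic-metric}. Using the fact that $\psi_R = 0$ when $\bu = \bzero$, the Hitchin equation becomes a quasilinear elliptic PDE of Toda type for $\psi_R$, with inhomogeneous term built from $\varphi_\bu - \varphi_\bzero = \sum_{n=1}^{N-1} P_{n+1} X_n \, \de z$. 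Decomposing into weight subspaces for the principal grading $H$ and using $R^{H/2} X_n R^{-H/2} = R^n X_n$, the source is suppressed by powers of $R$, while the linearization at $\psi_R = 0$ is, modulo lower-order terms, $\Delta - \ad(X_+)\ad(X_-)$, which is strictly positive on $\End_S E$ by Lemma \ref{lem:vanishing-lemma-SLN}. A weighted energy or maximum-principle argument in each weight subspace should then yield $\|\psi_R\|_{C^k(C)} \to 0$ uniformly as $R \searrow 0$. This uniform PDE estimate is the principal obstacle.

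Given that estimate, the rest is a direct computation in the distinguished trivializations, where $h_\natural(R) = R^H \lambda_\natural^{-H}$ by \eqref{eq:h-natural-local}. Using the identity $X_-^{\dagger_{h_\natural(R)}} = R^{-2}\lambda_\natural^2 X_+$, one finds that $\hbar R^2 \varphi_\bu^{\dagger_{h_\natural(R)}}$ admits a bounded limit whose leading part is $\hbar \lambda_\natural^2 X_+\,\de \bar z$. I would take $g(R) = M_{\hbar,z}\cdot R^{H/2}$ with $M_{\hbar,z} = \exp(\hbar\,\partial_z\log\lambda_\natural \cdot X_+)$ -- precisely the frame change used in the proof of Proposition \ref{prop:oper-construction-SLN} -- so that $dg \cdot g^{-1}$ absorbs the offending $(0,1)$-piece via the identity $\partial_{\bar z}\partial_z \log \lambda_\natural = \lambda_\natural^2$ of \eqref{eq:lambda-natural-de}, the conjugation of the Chern connection $D_{h(R)}$ produces the flat holomorphic structure $\bar\partial_{E_\hbar}$, and the conjugation of $\hbar^{-1}\varphi_\bu$ reproduces the RHS of \eqref{eq:oper-local-SLN} in the Fuchsian atlas (here relying on the vanishing \eqref{eq:error-term-SLN}, which is exactly why the Fuchsian projective structure is needed). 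Since $g(R)$ is upper-triangular, it preserves $F_{\hbar,\bullet}$; passing to the limit $R \searrow 0$ and invoking Proposition \ref{prop:oper-construction-SLN} then delivers the oper $(E_\hbar, \nabla_{\hbar,\bu}, F_{\hbar,\bullet})$ as the limit of $(E, \nabla_{R,\hbar}, g(R)^{-1} F_{\hbar,\bullet})$, proving the conjecture.
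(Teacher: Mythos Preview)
Your overall strategy matches the paper's: write $h(R)=h_\natural(R)\,e^{\chi}$, show $\chi\to 0$, deduce that $\nabla_{R,\hbar,\bu}$ converges in $E$ to $\hbar^{-1}\varphi_\bu+D_{h_\natural}+\hbar\,\varphi_\bzero^{\dagger_{h_\natural}}$, and then conjugate by $M_{\hbar,z}=\exp(\hbar\,\partial_z\log\lambda_\natural\,X_+)$ to obtain the oper of Proposition~\ref{prop:oper-construction-SLN}. The analytic step is handled differently: the paper gets $\chi=O(R^4)$ cleanly from the implicit function theorem applied to the map $N_\bu(\chi,R)$ of \eqref{eq:chi-equation-SLN}, whereas you sketch a weighted energy/maximum-principle argument. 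Either can be made to work, but the IFT route is both shorter and gives real-analyticity in $R$ for free. Note also that your linearization should read $\bar\partial_E\partial_E^{h_\natural}+[\varphi_\bu,[\varphi_\bzero^{\dagger_{h_\natural}},\,\cdot\,]]$, i.e.\ essentially $\ad(X_-)\ad(X_+)$ rather than $\ad(X_+)\ad(X_-)$; it is the injectivity of $\ad(X_+)$ on $\End_S E$ (Lemma~\ref{lem:vanishing-lemma-SLN}) that gives positivity.

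There is, however, a genuine error in your gauge transformation. You propose $g(R)=M_{\hbar,z}\cdot R^{H/2}$, but the factor $R^{H/2}$ is both unnecessary and harmful. It is unnecessary because, as you yourself observe, $\hbar R^2\varphi_\bu^{\dagger_{h_\natural(R)}}$ already has a finite limit, and $D_{h_\natural(R)}=D_{h_\natural}$ is independent of $R$; so $\nabla_{R,\hbar,\bu}$ converges in $E$ with no gauge change at all, which is exactly what the conjecture asks. It is harmful because conjugating the Higgs field by $R^{H/2}$ does \emph{not} preserve it: since $[H,X_-]=-2X_-$ one has $R^{H/2}X_-R^{-H/2}=R^{-1}X_-$, so $g(R)\,\hbar^{-1}\varphi_\bu\,g(R)^{-1}$ blows up as $R\to 0$ (and the other convention makes the $X_n$ terms blow up instead). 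The correct move is the paper's: first take the limit $R\to 0$ in $E$, obtaining \eqref{eq:limit-connection-SLN}, and only then apply the $R$-independent isomorphism $M_{\hbar,z}:E\to E_\hbar$. Drop the $R^{H/2}$ and your argument goes through.
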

We will prove the following explicit version
of Conjecture \ref{conj:gaiotto}:
\begin{thm} \label{thm:oper-limit-SLN}
Fix any $\bu \in \cB$.
Let $(E, \bar\partial_E, \varphi_\bu)$ be the corresponding Higgs
bundle in the Hitchin component, and let $h(R,\bu)$ be the family of
harmonic metrics on $E$ solving the rescaled Hitchin
equation \eqref{eq:harmonic-metric}. Let $F_\bullet$ be the filtration
\begin{equation}
F_n = \bigoplus_{i=1}^{n}  \cL^{N+1-2i} \subset E.
\end{equation}
Fix $\hbar \in \C^\times$ and let
\begin{equation} \label{eq:nabla-R-h-SLN}
  \nabla_{R,\hbar,\bu} = \hbar^{-1} \varphi_\bu + D_{h(R,\bu)} + \hbar R^2 \varphi_\bu^{\dagger_{h(R,\bu)}}.
\end{equation}
Then, as $R \to 0$ the flat connections $\nabla_{R,\hbar,\bu}$
converge to a flat connection
\begin{equation} \label{eq:limit-connection-SLN}
 \nabla_{0,\hbar,\bu} = \hbar^{-1} \varphi_\bu + D_{h_\natural} + \hbar \varphi_{\bzero}^{\dagger_{h_\natural}},
\end{equation}
and $(E, \nabla_{0,\hbar,\bu}, F_\bullet)$
is an $SL(N,\C)$-oper, equivalent to the $SL(N,\C)$-oper
$(E_\hbar, \nabla_{\hbar,\bu}, F_{\hbar,\bullet})$
of Proposition \ref{prop:oper-construction-SLN}.
\end{thm}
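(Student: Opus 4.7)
The plan is to establish the theorem in three movements: first, control the behavior of the harmonic metric $h(R,\bu)$ as $R \to 0$; second, pass to the limit in the connection formula; third, identify the limit as an oper equivalent to $\nabla_{\hbar,\bu}$.

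For the first step, the starting observation is that by Proposition~\ref{prop:h-natural-harmonic} the natural metric $h_\natural(R)$ is exactly the harmonic metric for the uniformizing Higgs bundle. I would write $h(R,\bu) = h_\natural(R) \cdot e^{\eta_R}$ for a section $\eta_R$ of $\End_S E$ that is Hermitian with respect to $h_\natural(R)$. A direct grading calculation in the distinguished trivializations shows that $h_\natural(R)$ is already an approximate solution of the Hitchin equation for $\varphi_\bu$: since $\varphi_\bu - \varphi_\bzero = \sum_{n \ge 1} P_{n+1} X_n\, \de z$ lies in strictly positive $H$-weight, its adjoint with respect to $h_\natural(R)$ has components of weight $-2n$ each scaling as $R^{2n}$, so
\[
R^2\bigl([\varphi_\bu,\varphi_\bu^{\dagger_{h_\natural(R)}}] - [\varphi_\bzero,\varphi_\bzero^{\dagger_{h_\natural(R)}}]\bigr) = O(R^4).
\]
The linearization of the Hitchin equation at $h_\natural$ is a uniformly elliptic self-adjoint operator on $\End_S E$, and by Lemma~\ref{lem:vanishing-lemma-SLN} its kernel is trivial. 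An implicit function theorem / contraction mapping argument then produces the true harmonic metric as a small perturbation and yields $\eta_R \to 0$ in $C^\infty(C, \End E)$ as $R \to 0$.

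For the second step, $\eta_R \to 0$ immediately gives $D_{h(R,\bu)} \to D_{h_\natural}$, noting that $D_{h_\natural(R)} = D_{h_\natural}$ is $R$-independent since $h_\natural(R) = h_\natural \circ R^H$ and $R^H$ is diagonal. The same grading calculation as above gives $R^2 \varphi_\bu^{\dagger_{h_\natural(R)}} = \varphi_\bzero^{\dagger_{h_\natural}} + O(R^4)$, because only the weight $-2$ piece (the adjoint of $X_-$) survives the $R^2$ rescaling in the limit. Combining, $\nabla_{R,\hbar,\bu} \to \nabla_{0,\hbar,\bu}$ in $C^\infty$, and the limit is flat as the limit of flat connections.

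For the third step, the oper conditions for $(E, \nabla_{0,\hbar,\bu}, F_\bullet)$ can be read off directly in the distinguished trivializations: the $(0,1)$ part $\bar\partial_E + \hbar \lambda_\natural^2 X_+\, \de \bar{z}$ preserves each $F_n$ because $X_+$ strictly lowers filtration level, so each $F_n$ is holomorphic for this new structure; the $(1,0)$ part sends $F_n$ into $F_{n+1} \otimes K_C$ since $X_-$ shifts level up by one while the terms from $H$, $X_n$ ($n \ge 1$), and $\partial_E^{h_\natural}$ preserve or lower levels; and the induced map $F_n/F_{n-1} \to F_{n+1}/F_n \otimes K_C$ is a nowhere-vanishing scalar multiple coming from $X_-$. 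To identify with the oper of Proposition~\ref{prop:oper-construction-SLN}, apply the gauge transformation $\exp(-\hbar f_z X_+)$ with $f_z = \partial_z \log \lambda_\natural$; this is the inverse of the local gauge used to assemble the transition functions $T_{\hbar,z,z'}$, and thus provides an explicit isomorphism of smooth bundles $E \simarrow E_\hbar$. A calculation parallel to the one in the proof of Proposition~\ref{prop:oper-construction-SLN} reduces $\nabla_{0,\hbar,\bu}$ to $\de + \hbar^{-1} \varphi_{\bu,z}$ modulo an extra term $\hbar(f_z^2 - \partial_z f_z) X_+\, \de z$, which vanishes identically in the Fuchsian atlas: a direct check in upper half-plane coordinates, where $\lambda_\natural = 1/(2\im z)$, confirms $\partial_z f_z = f_z^2$.

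The main obstacle is the analytical step (1). While the grading structure conveniently renders $h_\natural(R)$ a polynomially good approximate solution, one must still verify that the linearization is uniformly invertible as $R \to 0$ and that the contraction estimates are uniform in $R$; in particular, care is needed in choosing norms on $\End_S E$ (adapted to the weights of $H$) so that the relevant operators are uniformly controlled in the limit.
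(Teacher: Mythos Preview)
Your proposal follows essentially the same route as the paper: write $h(R,\bu) = h_\natural(R)\,\e^{\chi}$, show $h_\natural(R)$ is an $O(R^4)$-approximate solution via the $H$-grading, apply the implicit function theorem to get $\chi \to 0$, pass to the limit in \eqref{eq:nabla-R-h-SLN}, and then gauge by $\exp(\hbar f_z X_+)$ with $f_z = \partial_z\log\lambda_\natural$ to recover \eqref{eq:oper-local-SLN} up to the Schwarzian-type term that vanishes in the Fuchsian atlas. Steps (2) and (3) of your outline match the paper essentially verbatim.

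There is one genuine imprecision in step (1). You assert that the linearization of the Hitchin equation at $h_\natural$ is self-adjoint, and then invoke Lemma~\ref{lem:vanishing-lemma-SLN}. But after the rescaling that makes the problem regular at $R=0$ (equivalently, after using $R^2\varphi_\bu^{\dagger_{h_\natural(R)}} = \varphi_{R^2\bu}^{\dagger_{h_\natural}}$), the linearization at $(\chi,R)=(0,0)$ is
\[
\mathcal L_\bu(\dot\chi) \;=\; \bar\partial_E\,\partial_E^{h_\natural}\dot\chi \;+\; \bigl[\varphi_\bu,\, [\varphi_\bzero^{\dagger_{h_\natural}},\dot\chi]\bigr],
\]
which is \emph{not} self-adjoint when $\bu\neq\bzero$, since the inner and outer Higgs fields differ. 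Lemma~\ref{lem:vanishing-lemma-SLN} gives you trivial kernel only for $\mathcal L_\bzero$. The paper closes this gap with the observation that $\mathcal L_\bu-\mathcal L_\bzero = [\varphi_\bu-\varphi_\bzero,\,[\varphi_\bzero^{\dagger_{h_\natural}},\cdot\,]]$ is strictly upper-triangular for the $H$-grading on $\End_S E$ (both factors are strictly upper), so $\mathcal L_\bu$ still has trivial kernel; surjectivity then follows because $\mathcal L_\bu$ is a zeroth-order perturbation of the index-zero operator $\bar\partial_E\partial_E^{h_\natural}$. You already have all the grading ingredients in hand, so this is a small patch, but as written your invertibility claim is not justified. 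Your own caveat about ``uniform invertibility as $R\to 0$'' is in fact precisely this issue: if you instead linearize at each fixed $R>0$ you do get a self-adjoint operator, but its zero-order term degenerates, and you are pushed back to analyzing $\mathcal L_\bu$ anyway.
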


We emphasize that the harmonic metric $h(R,\bu)$ depends
on $R$, and indeed (as we will see) $h(R,\bu)$ diverges as $R \to 0$.
In particular,
we cannot simply drop the last term of
\eqref{eq:nabla-R-h-SLN} in the $R \to 0$ limit,
despite the explicit prefactor $R^2$; it
survives to become the last term of
\eqref{eq:limit-connection-SLN}, and is ultimately
responsible for the deformation of the holomorphic structure
as a function of $\hbar$.

\subsection{Proof of the main theorem for {$G = SL(2,\C)$}}
\label{sec:proof-SU2}

The case $N = 2$ of Theorem \ref{thm:oper-limit-SLN} is notationally simpler,
and contains the main ideas, so we do it separately.

Fix a coordinate patch $(U,z)$ on $C$,
and the corresponding distinguished trivialization of $E$.
Our first aim is to write an explicit local formula,
\eqref{eq:nabla-local-triv-SL2} below, for the
family of connections \eqref{eq:nabla-R-h-SLN} in $E$.

First we recall from \cite{MR1174252} that the decomposition
\begin{equation}
 E = \cL \oplus \cL^{-1}
\end{equation}
is orthogonal for $h(R,\bu)$.
Since $\cL^2 \simeq K_C$, $h(R,\bu)$ is
induced from a Hermitian metric $g(R,\bu)$ on $C$. In the local patch $(U,z)$,
\begin{equation} \label{eq:g}
g(R, \bu) = \lambda(R, \bu; z)^2 \de z \de \bar{z}
\end{equation}
for some positive real-valued function $\lambda(R,\bu;z)$ (which we
sometimes write $\lambda$ for short.) Then
\begin{equation}
h(R,\bu) = \begin{pmatrix} \lambda^{-1} & 0 \\ 0 & \lambda \end{pmatrix}.
\end{equation}

We now write the Chern connection $D_h$ explicitly.
Since the distinguished trivializations are holomorphic,
the $(0,1)$ part $\bar\partial_{D_h}$
is simply represented by $\bar\partial$. The $(1,0)$ part $\partial_{D_h}$
is then determined by unitarity with respect to $h$, which gives
\begin{equation}
\partial_{D_h} = \partial - \begin{pmatrix} \partial \log \lambda & 0 \\ 0 & -\partial \log \lambda \end{pmatrix},
\end{equation}
so altogether
\begin{equation} \label{eq:chern-explicit-SL2}
D_h = \de - \begin{pmatrix} \partial \log \lambda & 0 \\ 0 & -\partial \log \lambda \end{pmatrix}.
\end{equation}
Next, the choice of $\bu \in \cB$ just means
fixing a holomorphic quadratic differential $\phi_2 \in H^0(C, K_C^2)$.
Locally,
\begin{equation}
 \phi_2 = P_2 \, \de z^2
\end{equation}
where $P_2$ is a holomorphic function on $U$, and
\begin{equation} \label{eq:higgs-explicit-SL2}
  \varphi_\bu = \begin{pmatrix} 0 & P_2 \\ 1 & 0 \end{pmatrix} \de z,
  \qquad \varphi_\bu^{\dagger_h} = \begin{pmatrix} 0 & \lambda^{2}  \\ \lambda^{-2} \overline{P_2} & 0 \end{pmatrix} \de \bar{z}.
\end{equation}
Combining \eqref{eq:chern-explicit-SL2}, \eqref{eq:higgs-explicit-SL2}, \eqref{eq:nabla-R-h-SLN} gives the desired explicit representation,
\begin{equation} \label{eq:nabla-local-triv-SL2}
  \nabla_{R,\hbar,\bu} = \de + \hbar^{-1} \begin{pmatrix} 0 & P_2 \\ 1 & 0 \end{pmatrix} \de z -
  \begin{pmatrix} \partial \log \lambda & 0 \\ 0 & -\partial \log \lambda \end{pmatrix} +
  R^2 \hbar \begin{pmatrix} 0 & \lambda^{2}  \\ \lambda^{-2} \overline{P_2} & 0 \end{pmatrix} \de \bar{z}.
\end{equation}
Next we want to use \eqref{eq:nabla-local-triv-SL2} to understand
$\nabla_{R,\hbar,\bu}$ in the limit $R \to 0$. For this we need to
understand the behavior of $\lambda(R,\bu)$ as $R \to 0$.

Flatness of $\nabla_{R,\hbar,\bu}$ is equivalent to the fact that $h$ is the harmonic metric.
Thus computing the curvature of $\nabla_{R,\hbar,\bu}$ from \eqref{eq:nabla-local-triv-SL2} gives the harmonicity condition,
as an equation for $\lambda$:
\begin{equation} \label{eq:lambda-equation}
\partial_{\bar z} \partial_z \log \lambda - R^2(\lambda^{2}  - \lambda^{-2}\abs{P_2}^2 ) = 0.
\end{equation}

To get some intuition, first consider the special case $P_2 = 0$.
Then \eqref{eq:lambda-equation} specializes to
\begin{equation} \label{eq:lambda-equation-specialized}
  \partial_{\bar z} \partial_z \log \lambda - R^2 \lambda^{2} = 0,
\end{equation}
which says the metric $g(R, \bzero)$ of \eqref{eq:g} has constant curvature $-4 R^2$.
Thus $g(R, \bzero) = g_\natural / R^2$ where $g_\natural$ is the unique metric with constant curvature $-4$ (see \S\ref{sec:natural-metric}), and
$\lambda(R, \bzero) = \lambda_\natural / R$.

More generally when $P_2 \neq 0$, we use $g_\natural / R^2$ as background metric and write
\begin{equation}
g(R, \bu) = (g_\natural / R^2) \e^{2f}, \quad \lambda(R,\bu) = (\lambda_\natural / R) \e^f,
\end{equation}
where $f$ is a real-valued function on $C$. We claim that as $R \to 0$
\begin{equation} \label{eq:f-estimate}
\partial_z^a \partial_{\bar{z}}^b \, f = O(R^4)\ \ \mbox{for all}\ \  a,b \geq 0,
\end{equation}
(so in particular, $f = O(R^4)$).

To prove \eqref{eq:f-estimate}, first rewrite
\eqref{eq:lambda-equation} in terms of the Laplacian for $g_\natural$,
$\Delta_{g_\natural} =
\frac{4}{\lambda^2} \partial_{\bar z} \partial_z =
\frac{1}{\lambda^2} \left(\partial^2_{x} + \partial^2_{y} \right)$:
\begin{equation} \label{eq:Ng0}
N(f,R) = \Delta_{g_\natural} f + 4\left( 1 - \e^{2f} + R^4 \lvert P_2 \rvert^2 \e^{-2f}\right) = 0.
\end{equation}
The maximum principle shows  that for any $R \ge 0$, there exists at most one function $f$ such that $N(f,R) = 0$.
In fact, the method of upper and lower solutions shows that there is exactly one solution, but of course we
already know this when $R > 0$ from the existence and uniqueness of harmonic metrics, and $f = 0$ is
a solution when $R = 0$.

The linearization at $R=0$ is
\begin{equation} \label{eq:f-expansion}
\left. DN \right|_{(0,0)} (\dot{f}, 0) = \left( \Delta_{g_\natural} - 8\right) \dot{f},
\end{equation}
and this is an isomorphism as a map $\mathcal C^{k+2,\alpha}(C) \to \mathcal C^{k,\alpha}(C)$ for any $k \geq 0$ and $\alpha \in (0,1)$.
Note also that $N$ is a $\mathcal C^\infty$ mapping from a neighborhood of $0$ in $\mathcal C^{k+2,\alpha}(C) \times \mathbb R$
to $\mathcal C^{k,\alpha}(C)$.  The Banach space implicit function theorem now gives the existence of a $\mathcal C^\infty$
map $\Psi: [0, R_0) \to \mathcal C^{k+2,\alpha}(C)$ such that $N(\Psi(R), R) = 0$ for $0 \leq R  < R_0$, and $\Psi(0) = 0$.
From the uniqueness it follows that $\Psi$ is independent of $k$ and $\alpha$, so that $\Psi(R)$ is a $\cC^\infty$ function on $C$
for each $R \geq 0$, and in fact, $(z,R) \mapsto \Psi(R)(z)$ lies in $\cC^\infty(C \times [0,R_0))$.   We can say even more:
since all data in $N$ is real analytic, the real analytic version of the implicit function
theorem \cite{MR2977424} shows that $\Psi$ is real analytic in $R$.   Finally, by the uniqueness of harmonic metrics,
$\Psi(R)$ must agree with the desired $f$ when $R > 0$.

The upshot of the last paragraph is that we may expand $f$ in a Taylor series around $R = 0$,
\begin{equation}
	f = R f_1 + R^2 f_2 + \cdots
\end{equation}
Substituting this series into \eqref{eq:Ng0}, we see that
$f_1 = f_2 = f_3 = 0$, and hence we get \eqref{eq:f-estimate} as desired.

It follows that as $R \to 0$ we have
\begin{equation}
  \lambda = \frac{\lambda_\natural}{R} + O(R^3).
\end{equation}
Substituting this in \eqref{eq:nabla-local-triv-SL2}, we see that as $R \to 0$,
$\nabla_{R,\hbar,\bu}$ converges to
\begin{equation} \label{eq:nabla0-local-triv-SL2}
 \nabla_{0,\hbar,\bu} = \de + \hbar^{-1} \begin{pmatrix} 0 & P_2 \\ 1 & 0 \end{pmatrix} \de z - \begin{pmatrix}
\partial \log \lambda_\natural & 0 \\ 0 & - \partial \log \lambda_\natural \end{pmatrix} +
\hbar \begin{pmatrix} 0 & \lambda_\natural^2 \\
0 & 0 \end{pmatrix} \de \bar{z}.
\end{equation}
This is the desired \eqref{eq:limit-connection-SLN}.

\medskip

It is instructive to see directly that $(E,\nabla_{0,\hbar,\bu},F_\bullet)$
is an $SL(2,\C)$-oper, where $F_\bullet$ is the filtration
\begin{equation}
  0 \subset \cL \subset E.
\end{equation}
For this the key is the lower left entry $\hbar^{-1} \de z$
in \eqref{eq:nabla0-local-triv-SL2}, mapping
$\cL \to \cL^{-1} \otimes K_C \simeq \cL$.
The two salient facts about this are:
\begin{itemize}
\item Its $(0,1)$ part is trivial, so
$\cL$ is a holomorphic subbundle of $(E,\bar\partial_{\nabla_{0,\hbar,\bu}})$;
\item Its $(1,0)$ part is nowhere vanishing, i.e.,
$\bar \nabla_{0,\hbar,\bu}: \cL \to (E / \cL) \otimes K_C$ is an isomorphism of line bundles.
\end{itemize}
These conditions say precisely that
$(E, \nabla_{0,\hbar,\bu}, F_\bullet)$ is an $SL(2,\C)$-oper.

\medskip

Finally we show that
$(E, \nabla_{0,\hbar,\bu}, F_\bullet)$ is equivalent to the
$SL(2,\C)$-oper $(E_\hbar, \nabla_{\hbar,\bu}, F_{\hbar,\bullet})$
of Proposition \ref{prop:oper-construction-SLN}.
Comparing \eqref{eq:nabla0-local-triv-SL2}
to the desired form \eqref{eq:oper-local-SLN},
we see that we need to change our local trivializations
by a gauge transformation
which eliminates the last two terms in \eqref{eq:nabla0-local-triv-SL2},
i.e.~by a matrix of the form
\begin{equation}
  M_{\hbar,z} = \begin{pmatrix} 1 & \hbar \beta \\ 0 & 1 \end{pmatrix},
\end{equation}
where $\partial_{\bar z} \beta = \lambda_\natural^2$.
Because of the equation \eqref{eq:lambda-natural-de} for $\lambda_\natural$,
there is a natural candidate, $\beta = \partial_z \log \lambda_\natural$, leading to
\begin{equation} \label{eq:gauge-xform-SL2}
  M_{\hbar,z} = \begin{pmatrix} 1 & \hbar \partial_z \log \lambda_\natural \\ 0 & 1 \end{pmatrix}.
\end{equation}
Relative to the new local trivializations, the transition maps
from patches $(U,z)$ to $(U',z')$ become the ones we
wrote in \eqref{eq:T-cocycle-alternative}; these are
indeed the transition maps of $E_\hbar$.
What remains is to compute
$\nabla_{0,\hbar,\bu}$ in the new trivializations.
Computing directly $M_{\hbar,z} \circ \nabla_{0,\hbar,\bu} \circ M^{-1}_{\hbar,z}$ from \eqref{eq:nabla0-local-triv-SL2}, \eqref{eq:gauge-xform-SL2}
we obtain
\begin{equation} \label{eq:nabla0-new-triv-SL2}
  \de + \hbar^{-1} \begin{pmatrix} 0 & P_2 \\ 1 & 0
  \end{pmatrix} \de z +
  \hbar \begin{pmatrix} 0 & \frac{2 (\partial_z \lambda_\natural)^2 - \lambda_\natural \partial_z^2 \lambda_\natural}{\lambda_\natural^2} \\
  0 & 0
  \end{pmatrix} \de z.
\end{equation}
If our coordinate patch $(U,z)$ is in the atlas given
by Fuchsian uniformization, then the explicit form of the
hyperbolic metric in the upper half-plane gives
$\lambda_\natural = \frac{\I}{z - \bar{z}}$,
and then the last term in \eqref{eq:nabla0-new-triv-SL2}
vanishes. Thus \eqref{eq:nabla0-new-triv-SL2} reduces to the desired \eqref{eq:oper-local-SLN}.
This finishes the proof of Theorem \ref{thm:oper-limit-SLN}
in case $N=2$.

\subsection{Proof of the main theorem for {$G = SL(N,\C)$}}
\label{sec:proof-SUN}

Now we prove Theorem \ref{thm:oper-limit-SLN}
in full generality. The proof
is essentially the same as for $N=2$, with three differences:
\begin{itemize}
  \item The notation is less transparent, because
we cannot write everything in terms of explicit
$2 \times 2$ matrices.
\item The harmonic metrics $h(R,\bu)$ are no longer
determined by a single function on $C$, so we have to
study a coupled system instead of a
single scalar equation.
\item The harmonic metrics $h(R,\bu)$ may not be
diagonal in the distinguished trivializations.
\end{itemize}

As in the case $N=2$, the main technical issue is to control
the harmonic metric $h(R,\bu)$ in the limit $R \to 0$. We will show that
in this limit $h(R,\bu)$ approaches the natural metric
$h_\natural(R)$ of \S\ref{sec:natural-metric}. To say this precisely:
define $\End_{\natural,R}E$ to be the set of endomorphisms
which are self-adjoint with respect to $h_\natural(R)$.
Then $h(R,\bu): E \to \overline{E}^*$ can be written uniquely as
\begin{equation} \label{eq:h-chi-SLN}
  h(R,\bu) = h_\natural(R) \e^{\chi(R,\bu)} = \e^{\half \overline{\chi(R,\bu)}^T} h_\natural(R) \e^{\half \chi(R,\bu)}
\end{equation}
where $\chi(R,\bu) \in \End_{\natural,R} E$.
We will show that $\chi(R,\bu) \to 0$ as $R \to 0$:
\begin{lem} \label{lem:chi-estimate-SLN}
We have
\begin{equation}
  \chi(R,\bu) = O(R^4).
\end{equation}
\end{lem}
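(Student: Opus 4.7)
The plan is to mimic the $N=2$ argument of \S\ref{sec:proof-SU2}. Substituting $h = h_\natural(R) e^\chi$ into Hitchin's equation \eqref{eq:harmonic-metric} yields a nonlinear operator $N(\chi, R)$. The crucial observation enabling the argument is that, although $h_\natural(R)$ is individually singular as $R \to 0$, the combinations actually entering the equation extend smoothly: using the $H$-grading relations for $X_n$ and $X_\pm$, one computes locally
\begin{equation*}
R^2 \varphi_\bu^{\dagger_{h_\natural(R)}} = \lambda_\natural^2 X_+ \de \bar z + \sum_{n=1}^{N-1} R^{2n+2} \lambda_\natural^{-2n} \overline{P_{n+1}} X_n^T \de \bar z,
\end{equation*}
which has a finite limit $\varphi_\bzero^{\dagger_{h_\natural}}$ at $R = 0$; simultaneously a direct computation gives $D_{h_\natural(R)} = D_{h_\natural}$ in the distinguished trivializations, independently of $R$. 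Thus $N$ extends real-analytically to a neighborhood of $(0,0)$ in $\mathcal C^{k+2, \alpha} \times \R$, with $\chi$ ranging in an appropriate $R$-independent subspace of $\End_S E$ (obtained by decomposing along $H$-weight spaces and imposing the relevant reality constraints).

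I would next verify that $N(0, 0) = 0$. At this point the equation reads $F_{D_{h_\natural}} + [\varphi_\bu, \varphi_\bzero^{\dagger_{h_\natural}}]$; since $[X_+, X_n] = 0$ by \eqref{eq:Xplus-commutator}, the extra $X_n$-terms in $\varphi_\bu$ commute with $\varphi_\bzero^{\dagger_{h_\natural}} = \lambda_\natural^2 X_+ \de \bar z$, collapsing this to $F_{D_{h_\natural}} + [\varphi_\bzero, \varphi_\bzero^{\dagger_{h_\natural}}] = 0$ via Proposition \ref{prop:h-natural-harmonic}. The main obstacle is then to show that the linearization
\begin{equation*}
L \dot\chi = \bar\partial_E \partial_E^{h_\natural} \dot\chi + \bigl[\varphi_\bu, [\varphi_\bzero^{\dagger_{h_\natural}}, \dot\chi]\bigr]
\end{equation*}
is an isomorphism of the appropriate Hölder spaces. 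Ellipticity is immediate. For injectivity I would adapt the standard Bochner--Weitzenb\"ock argument used for invertibility of the Hitchin linearization at a stable solution, concluding that any $\dot\chi \in \ker L$ is both $\bar\partial_E$-holomorphic and commutes with $X_+$; Lemma \ref{lem:vanishing-lemma-SLN} then forces $\dot\chi = 0$, and vanishing Fredholm index gives surjectivity.

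Granted invertibility of $L$, the real-analytic implicit function theorem (\cite{MR2977424}) yields a unique real-analytic curve $\Psi \colon [0, R_0) \to \mathcal C^{k+2, \alpha}$ with $N(\Psi(R), R) = 0$ and $\Psi(0) = 0$, which agrees with $\chi(R, \bu)$ for $R > 0$ by uniqueness of the harmonic metric. Finally I would expand $\chi = R \chi_1 + R^2 \chi_2 + \cdots$ and substitute into $N(\chi(R), R) = 0$. The displayed formula above shows $N(0, R) = O(R^4)$, since every $R$-dependent correction in $R^2 \varphi_\bu^{\dagger_{h_\natural(R)}}$ carries a factor $R^{2n+2}$ with $n \geq 1$. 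Collecting terms at orders $R$, $R^2$, and $R^3$ in $N(\chi(R), R) = 0$ yields (inductively, since the quadratic and higher corrections vanish once $\chi_1 = \chi_2 = 0$) homogeneous equations $L \chi_j = 0$, forcing $\chi_1 = \chi_2 = \chi_3 = 0$ and hence $\chi(R, \bu) = O(R^4)$.
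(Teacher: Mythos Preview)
Your overall strategy matches the paper's proof: define the nonlinear operator $N(\chi, R)$, verify $N(0,0) = 0$, show the linearization at $(0,0)$ is invertible, apply the real-analytic implicit function theorem, and read off $O(R^4)$ from the Taylor expansion. The computation that $R^2 \varphi_\bu^{\dagger_{h_\natural(R)}}$ extends smoothly across $R=0$ with limit $\varphi_\bzero^{\dagger_{h_\natural}}$, and the verification of $N(0,0)=0$ via $[X_+, X_n] = 0$, are exactly as in the paper.

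The one genuine gap is the injectivity of the linearization. The zeroth-order term in $L = \mathcal L_\bu$ is $[\varphi_\bu, [\varphi_\bzero^{\dagger_{h_\natural}}, \cdot\,]]$, and for $\bu \neq \bzero$ this is \emph{not} self-adjoint with respect to the $h_\natural$ inner product, because $\varphi_\bu^{\dagger_{h_\natural}} \neq \varphi_\bzero^{\dagger_{h_\natural}}$. The Bochner pairing therefore gives
\[
\IP{\dot\chi, \mathcal L_\bu \dot\chi} = \norm{\partial_E^{h_\natural}\dot\chi}^2 + \bigl\langle [\varphi_\bu^{\dagger_{h_\natural}}, \dot\chi],\, [\varphi_\bzero^{\dagger_{h_\natural}}, \dot\chi] \bigr\rangle,
\]
and the second term is not a norm; you cannot conclude that $\dot\chi$ commutes with $X_+$. (In particular, $(0,0)$ is not the linearization point of Hitchin's equation at an honest solution when $\bu \neq \bzero$, so the ``standard'' stability argument does not apply.) The paper fixes this in two steps. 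First, run your Bochner argument at $\bu = \bzero$, where the pairing does yield $\norm{\partial_E^{h_\natural}\dot\chi}^2 + \norm{[\varphi_\bzero^{\dagger_{h_\natural}}, \dot\chi]}^2$ and Lemma~\ref{lem:vanishing-lemma-SLN} gives $\ker \mathcal L_\bzero = 0$. Second, observe that
\[
\mathcal L_\bu - \mathcal L_\bzero = \bigl[\varphi_\bu - \varphi_\bzero,\, [\varphi_\bzero^{\dagger_{h_\natural}}, \cdot\,]\bigr]
\]
strictly raises the $H$-grading on $\End_S E$ (both factors are strictly upper triangular in the distinguished trivializations), while $\mathcal L_\bzero$ preserves it. A triangularity argument then gives $\ker \mathcal L_\bu = 0$: if $(\mathcal L_\bzero + (\mathcal L_\bu - \mathcal L_\bzero))\dot\chi = 0$, the lowest-grade component of $\dot\chi$ is annihilated by $\mathcal L_\bzero$, hence vanishes, and one inducts upward. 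With this amendment your proof goes through.
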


\begin{proof}
First some notation: for $\bu = (\phi_2, \dots, \phi_N) \in \cB$
and $\alpha \in \R^+$,
we let
\begin{equation}
 \alpha \bu = (\alpha^2 \phi_2, \dots, \alpha^N \phi_N) \in \cB.
\end{equation}
Now define
\begin{equation} \label{eq:chi-equation-SLN}
  N_\bu(\chi,R) = \left[\bar\partial_E, \e^{-\chi} \circ \partial_E^{h_\natural} \circ \e^\chi \right] + \left[\varphi_\bu, \e^{-\chi} \circ \varphi_{R^2 \bu}^{\dagger_{h_\natural}} \circ \e^\chi \right].
\end{equation}
We proceed in steps:
\begin{enumerate}
\item \label{item:op-type} For any fixed $R$, $N_\bu(\cdot, R)$ is a nonlinear operator
\begin{equation}
  N_\bu(\cdot, R): \Omega^0(\End_S E) \to \Omega^2(\End_S E).
\end{equation}
\item \label{item:N-harmonic} For any $R > 0$ and $\chi \in \End_{\natural,R} E$, we have
$N_\bu(\chi,R) = 0$ iff $h_\natural(R) \e^{\chi}$ is the harmonic metric for $\varphi_\bu$ with parameter $R$.
\item \label{item:N-zero} $N_\bu(0,0) = 0$.
\item \label{item:linearization-bijective} The linearization $\left. D_\chi N_\bu \right|_{(0,0)}$ is bijective.
\item \label{item:solution} There exists a real analytic $\chi(R,\bu) \in \End_{\natural,R} E \cap \End_S E$ for $R \in [0,R_0)$ such that
\begin{equation} \label{eq:N-solution-SLN}
 N_\bu(\chi(R,\bu),R) = 0.
\end{equation}
\item \label{item:taylor-R4} The first nonzero term in the Taylor expansion of
$\chi(R,\bu)$ around $R=0$ appears at order $R^4$.
\end{enumerate}

For (\ref{item:op-type}), what needs to be checked is that
$N_{\bu}(\cdot, R)$ preserves $\End_S E$.
This is a straightforward calculation using the compatibility
between $S$ and the rest of the data, as expressed in
\eqref{eq:S-cov-const}, \eqref{eq:S-h-compatible},
\eqref{eq:higgs-S-self-adjoint}.

For (\ref{item:N-harmonic}) we also compute directly.
The curvature of the Chern connection $D_h$
for the metric $h = h_\natural(R) \e^{\chi} = h_\natural {R^H} \e^\chi$ is
\begin{equation}
F_{D_h} = \left[\bar\partial_E, \partial_E^h \right] = \left[\bar\partial_E, \e^{-\chi} {R^{-H}} \circ \partial_E^{h_\natural} \circ {R^H} \e^\chi \right] =
\left[\bar\partial_E, \e^{-\chi} \circ \partial_E^{h_\natural} \circ \e^\chi \right],
\end{equation}
while
\begin{equation} \label{eq:h-adjoint-SLN}
  \varphi_\bu^{\dagger_h} = \e^{-\chi} {R^{-H}} \circ \varphi_\bu^{\dagger_{h_\natural}} \circ {R^H} \e^\chi = R^{-2} \e^{-\chi} \circ \varphi_{R^2 \bu}^{\dagger_{h_\natural}} \circ \e^\chi.
\end{equation}
Combining these gives
\begin{equation}
 F_{D_h} + R^2 [\varphi_\bu,\varphi_\bu^{\dagger_h}] = N_\bu(\chi,R)
\end{equation}
which is the desired result.

For (\ref{item:N-zero}), observe first that
\begin{equation}
  N_\bu(0,0) = \left[\bar\partial_E, \partial_E^{h_\natural}\right] + \left[\varphi_\bu, \varphi_\bzero^{\dagger_{h_\natural}} \right].
\end{equation}
This would vanish if $\varphi_\bu$ were replaced by $\varphi_\bzero$ since $h_\natural$ is the harmonic metric
for the Higgs field $\varphi_\bzero$,
by Proposition \ref{prop:h-natural-harmonic} (with $R=1$).
However, the difference $\varphi_\bu - \varphi_\bzero$ is a sum of terms $X_n$, all of which commute with
$\varphi_\bzero^{\dagger_{h_\natural}}$ since $\varphi_\bzero^{\dagger_{h_\natural}}$ is proportional to $X_+$. We conclude by recalling that
$[X_n, X_+] = 0$ by \eqref{eq:Xplus-commutator}.

For (\ref{item:linearization-bijective}) we compute that
\begin{equation}
\mathcal L_\bu (\dot\chi) :=  \left. D_\chi N_\bu\right|_{(0,0)} (\dot\chi) =
\bar\partial_E \partial_E^{h_\natural}\dot\chi + \left[\varphi_\bu, \left[\varphi_\bzero^{\dagger_{h_\natural}}, \dot\chi \right]\right].
\end{equation}
We wish to show this operator has trivial kernel. First consider the case $\bu = \bzero$.
Using the $L^2$ pairing induced by $h_\natural$, we have
\begin{equation}
\IP{\dot\chi, \mathcal L_\bzero\dot\chi} = \norm{\partial_E^{h_\natural} \dot\chi}^2 + \left\lVert \left[\varphi_\bzero^{\dagger_{h_\natural}}, \dot\chi \right] \right\rVert^2.
\end{equation}
By Lemma \ref{lem:vanishing-lemma-SLN}, the second term on the right is strictly positive if $\dot\chi \neq 0$,
so $\mathcal L_\bzero$ has trivial kernel. It can be deformed among elliptic operators to the self-adjoint operator
$\bar\partial_E \partial_E^{h_\natural}$, and hence has index zero, which means that $\mathcal L_\bzero$ is also surjective,
and hence an isomorphism $\cC^{k+2,\alpha} \to \cC^{k,\alpha}$ for any $k \geq 0$.

We now extend this to a statement about $\cL_\bu$.  To this end, we use the grading on $\End_S E$ where a matrix has grade $k$ if, in the distinguished
local trivializations, its nonzero entries are $k$ steps
above the diagonal.  Notice that $\mathcal L_\bzero$ preserves this grading.
Moreover, we have
\begin{equation}
\mathcal L_\bu (\cdot) - \mathcal L_\bzero(\cdot) = \left[\varphi_\bu - \varphi_\bzero, \left[\varphi_\bzero^{\dagger_{h_\natural}}, \cdot\right]\right]
\end{equation}
and this strictly increases the grading since
both $\varphi_\bu - \varphi_\bzero$
and $\varphi_\bzero^{\dagger_{h_\natural}}$ are strictly
upper triangular. It follows that $\mathcal L_\bu$ also has trivial kernel.
(Indeed, given operators $A,B$
where $A$ preserves a grading and $B$ increases it,
$(A+B)v = 0$ implies that $A$ annihilates the lowest-grade
component of $v$.) Then, by the same remarks as above,
$\cL_\bu$ is also surjective.

To obtain (\ref{item:solution}) we can apply the implicit function theorem,
exactly as in the case $N=2$ above, to deduce the existence
of a smooth function $\chi(R, \bu)$ such that $N_\bu( \chi(R, \bu), R) \equiv 0$ for $0 \leq R \leq R_0$.  As before,
this solution is real analytic in $R$ and $z$ jointly.
Moreover, writing $\dagger$ for $\dagger_{h_\natural(R)}$ we compute
directly from \eqref{eq:chi-equation-SLN}
\begin{equation}
N_\bu(\chi, R)^{\dagger} = \e^{\chi^{\dagger}} N_\bu(\chi^\dagger, R) \e^{-\chi^\dagger},
\end{equation}
so if $\chi$ is a solution then $\chi^\dagger$ is as well.
Thus the uniqueness in the implicit function theorem forces
$\chi = \chi^\dagger$, i.e.~$\chi(R,\bu) \in \End_{\natural,R} E$.

Finally, for (\ref{item:taylor-R4}) we simply plug the Taylor series
\begin{equation}
\chi(R,\bu) = \sum_{n=1}^\infty R^n \chi_n(\bu)
\end{equation}
into \eqref{eq:N-solution-SLN}, and expand in powers
of $R$. From \eqref{eq:chi-equation-SLN} we have
\begin{equation}
N_\bu(\chi,R) = N_{\bu}(\chi,0) + O(R^4).
\end{equation}
Thus at order $R^1$ we have to solve
\begin{equation}
\mathcal L_\bu (\chi_1) = 0,
\end{equation}
which we have already seen implies $\chi_1 = 0$.
Similarly we get $\chi_2 = \chi_3 = 0$.
This finishes the proof.
\end{proof}

Now we are ready to prove Theorem \ref{thm:oper-limit-SLN}.
We just substitute $h(R,\bu) = h_\natural(R) \e^{\chi(R,\bu)}$ in
\eqref{eq:nabla-R-h-SLN}, obtaining (using \eqref{eq:h-adjoint-SLN})
\begin{equation}
  \nabla_{R,\hbar,\bu} = \hbar^{-1} \varphi_\bu + \e^{-\chi(R,\bu)} \circ D_{h_\natural} \circ \e^{\chi(R,\bu)} + \hbar \e^{-\chi(R,\bu)} \circ \varphi_{R^2 \bu}^{\dagger_{h_\natural}} \circ \e^{\chi(R,\bu)}.
\end{equation}
In the limit $R \to 0$ we have $\chi(R,\bu) \to 0$, so this reduces to
\begin{equation}
  \nabla_{0,\hbar,\bu} = \hbar^{-1} \varphi_\bu + D_{h_\natural} + \hbar \varphi_{\bzero}^{\dagger_{h_\natural}},
\end{equation}
as desired.

To verify that $(E, \nabla_{0,\hbar,\bu}, F_\bullet)$
is equivalent to the oper of Proposition
\ref{prop:oper-construction-SLN}, we proceed just as we did
for $N=2$: introduce the matrix
\begin{equation}
  M_{\hbar,z} = \exp\left(\hbar (\partial_z \log \lambda_\natural) X_+\right)
\end{equation}
and compute directly
$M_{\hbar,z} \circ \nabla_{0,\hbar,\bu} \circ M_{\hbar,z}^{-1}$,
giving
\begin{equation}
  \de + \hbar^{-1} \varphi_\bu + \hbar \varepsilon X_+,
\end{equation}
where the ``error term'' $\varepsilon = \frac{2 (\partial_z \lambda_\natural)^2 - \lambda_\natural \partial_z^2 \lambda_\natural}{\lambda_\natural^2}$
vanishes when the
local coordinate $(U,z)$ is in the atlas given by
Fuchsian uniformization. This completes the proof.

\section{The scaling limit, for simple {$G$}}

In \S\ref{sec:gaiotto-SLN} we have stated and proved our main theorem for the
group $G = SL(N,\C)$. In this final section we generalize
to arbitrary simple, simply connected complex Lie groups $G$. This goes along the same lines
as for $G = SL(N,\C)$ --- indeed all the essential computations really have to do with
a principal $SL(2,\C)$ subgroup, which already appeared in the $G = SL(N,\C)$ case.
Thus we will be fairly brief.

We follow the conventions in \cite{Labourie2014}.

\subsection{Simple complex Lie algebras, Kostant subalgebra, and involutions} \label{sec:lie-stuff}

Fix a simple, simply connected
complex Lie group $G$ and a Cartan subgroup $H \subset G$.
Let $\Delta \subset \mathfrak{h}^*$ be the set of roots of $(G,H)$.
Choose a positive subset
$\Delta^+ \subset \Delta$, and let $\Pi \subset \Delta^+$ be the set of simple roots. Then there is a Chevalley basis for $\fg$:
\begin{equation}
 \fg = \left< \{h_{\alpha_i} \}_{\alpha_i \in \Pi}, \{ x_{\alpha} \}_{\alpha \in \Delta} \right>.
\end{equation}
The associated Borel subalgebra is
\begin{equation}
 \fb = \left< \{h_{\alpha_i} \}_{\alpha_i \in \Pi}, \{ x_{\alpha} \}_{\alpha \in \Delta^+} \right> \subset \fg.
\end{equation}
Let $B \subset G$ be the corresponding Borel subgroup.

Kostant's principal $\fsl(2,\C) \subset \fg$ is spanned by
$(H, X_+, X_-)$, defined in terms of the Chevalley basis
as:
\begin{equation} \label{eq:principal-sl2-generators}
H = \sum_{\alpha \in \Delta^+} h_\alpha = \sum_{\alpha_i \in \Pi} r_{\alpha_i} h_{\alpha_i}, \quad
X_\pm = \sum_{\alpha_i \in \Pi} \sqrt{r_{\alpha_i}} x_{\pm \alpha_i},
\end{equation}
and obeying
\begin{equation} \label{eq:SL2-relations-redux}
 [H, X_\pm] = \pm 2 X_\pm, \qquad [X_+, X_-]=H.
 \end{equation}
The first equation in \eqref{eq:principal-sl2-generators}
defines the coefficients $r_{\alpha_i} \in \Z^+$.
$\mathfrak{g}$ decomposes
as a sum of irreducible representations $\fv_1, \dots, \fv_r$
of the principal $\fsl(2,\C)$, where $r$ is the rank of $G$.
Each $\fv_n$ has odd complex dimension; we arrange them
in increasing order and write
\begin{equation}
 \dim_\C \fv_n = 2m_n+1.
\end{equation}
Then $(m_1, m_2, \dots, m_r)$ are the \ti{exponents} of $G$.

Let $X_{n}$ be a highest-weight vector in $\mathfrak{v}_n$,
i.e.~one obeying
\begin{equation}\label{eq:basiscommuteG}
[X_+, X_n] = 0, \qquad [H,X_n] = 2 m_n H.
\end{equation}
$X_n$ is determined up to a scalar multiple,
which we do not fix.

Let $\rho: \mathfrak{g} \rightarrow \mathfrak{g}$ be the
conjugate-linear Cartan involution associated
to the Chevalley system.  Its defining properties are that
it preserves $\mathfrak{h}$ (but is not the identity there) and satisfies
$\rho(x_{\alpha})= -x_{-\alpha}$.
$\rho$ determines a Hermitian form on $\mathfrak{g}$:
\begin{equation} \label{eq:hermitian-form-G}
 \langle Y_1, Y_2 \rangle = -B \left( Y_1, \rho(Y_2) \right)
\end{equation}
where $B$ is the Killing form on $\fg$.

Another important involution is given by:
\begin{prop}[\cite{MR1174252} Proposition 6.1]\label{lem:Gpositive}
There is a complex-linear involution
$\sigma: \mathfrak{g} \rightarrow \mathfrak{g}$ characterized by
\begin{equation}
 \sigma|_{\mathrm{ker}(\mathrm{ad}(X_+))} = -1, \qquad
\sigma(X_-) = -X_-.
\end{equation}
Moreover, $\sigma$ obeys
\begin{equation} \label{eq:sigma-rho-commute}
  \sigma \circ \rho=\rho \circ \sigma.
\end{equation}
\end{prop}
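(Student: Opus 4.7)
The plan is to construct $\sigma$ explicitly in terms of the principal $\mathfrak{sl}(2,\C)$-decomposition $\fg = \bigoplus_{n=1}^r \fv_n$ and then verify the three defining properties. In each summand $\fv_n$, the irreducibility as an $\mathfrak{sl}(2,\C)$-module means a basis is given by the iterated lowerings $\{\ad(X_-)^k X_n : 0 \le k \le 2m_n\}$. On this basis I would set
\begin{equation}
\sigma\!\left(\ad(X_-)^k X_n\right) = (-1)^{k+1} \ad(X_-)^k X_n
\end{equation}
and extend complex-linearly to all of $\fg$. This makes $\sigma$ manifestly a complex-linear involution (since $(-1)^{2(k+1)}=1$), so $\sigma^2 = 1$ is automatic.

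The two stipulated constraints are then immediate. Taking $k=0$ gives $\sigma(X_n) = -X_n$ for each $n$; since $\ker(\ad X_+) = \bigoplus_n \C\cdot X_n$ (each $X_n$ being a highest-weight vector, by \eqref{eq:basiscommuteG}), this yields $\sigma|_{\ker(\ad X_+)} = -\Id$. For $\sigma(X_-) = -X_-$, observe that $X_-$ sits inside $\fv_1$ (the copy of the principal $\mathfrak{sl}(2,\C)$ itself, with highest-weight vector $X_1 = X_+$) and a direct bracket computation using \eqref{eq:SL2-relations-redux} gives $\ad(X_-)^2 X_+ = -2X_-$; so $X_-$ corresponds to $k=2$, and $\sigma(X_-) = (-1)^3 X_- = -X_-$.

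The substantive content is that the formula above actually defines a Lie algebra involution (as needed for the uniqueness part of the characterization). The formula is itself forced, by iterating $\sigma([X_-,Y]) = [\sigma X_-, \sigma Y] = -[X_-,\sigma Y]$ starting from $\sigma(X_n) = -X_n$. To check it really is an automorphism, one expands $\sigma[Y_1, Y_2] = [\sigma Y_1, \sigma Y_2]$ on weight vectors $Y_i = \ad(X_-)^{k_i} X_{n_i}$ and decomposes $[Y_1, Y_2]$ into $\fv_n$-isotypic components. A short calculation using the $\ad H$-grading reduces the required identity to the parity statement: if $\fv_n$ appears as a summand of $[\fv_{n_1}, \fv_{n_2}]$, then $m_{n_1}+m_{n_2}+m_n$ is odd. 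I expect this parity identity, which is a structural property of the principal $\mathfrak{sl}(2,\C)$ due to Kostant, to be the main obstacle; one elegant reformulation is $\sigma|_{\fv_n} = (-1)^{m_n+1}\,\tau|_{\fv_n}$ with $\tau = \Ad(\exp(\pi \I H/2))$ the manifestly inner involution acting by $(-1)^j$ on the $\ad H$-eigenspace of weight $2j$, so that compatibility of $\sigma$ with the bracket is equivalent to the parity condition on the scalars $(-1)^{m_n+1}$.

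Finally, for $\sigma \circ \rho = \rho \circ \sigma$: since $\rho$ is a complex-antilinear Lie algebra automorphism with $\rho(X_\pm) = -X_\mp$ and $\rho(H) = -H$, it preserves the principal $\mathfrak{sl}(2,\C)$-triple and hence sends each isotypic component $\fv_n$ to itself (after a harmless adjustment in the $D_{2k}$ case where two exponents coincide). Using $\rho([X_-, Y]) = -[X_+, \rho Y]$, an induction gives $\rho(\ad(X_-)^k X_n) = (-1)^k \ad(X_+)^k \rho(X_n)$, and $\rho(X_n)$ is proportional to the lowest-weight vector $\ad(X_-)^{2m_n}X_n$; rearranging, $\rho$ sends the weight-$(2m_n{-}2k)$ vector to a scalar multiple of the weight-$(-2m_n{+}2k)$ vector, i.e.\ to (scalar)\,$\ad(X_-)^{2m_n - k}X_n$. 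Then $\sigma$ applies $(-1)^{k+1}$ before $\rho$ and $(-1)^{(2m_n - k)+1}$ after; these signs agree because $2m_n$ is even, so $\sigma\rho = \rho\sigma$.
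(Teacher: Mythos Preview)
The paper itself does not prove this proposition; it is quoted from Hitchin (Proposition~6.1 of \cite{MR1174252}), so there is no in-paper argument to compare against. Let me assess your approach on its own terms.

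Your reduction is correct: the two displayed conditions force $\sigma$ to act by $(-1)^{k+1}$ on $\ad(X_-)^k X_n$, and the only question is whether this linear map respects the Lie bracket. You correctly reformulate this as the parity condition that $m_a+m_b+m_c$ is odd whenever $\fv_c$ occurs in $[\fv_a,\fv_b]$, equivalently that the sign character $n\mapsto(-1)^{m_n+1}$ on the isotypic pieces is bracket-compatible. But you then stop, saying you ``expect'' this to hold. That is a genuine gap: this parity statement is precisely the nontrivial content of the proposition. For types $B_n,C_n,G_2,F_4,E_7,E_8$ all exponents are odd, the sign character is trivial, and $\sigma=\tau=\Ad(\exp(\pi\I H/2))$ is inner; but for $A_{n\ge2},D_n,E_6$ some exponents are even and one is really asserting that a specific outer involution exists with the prescribed action on the highest-weight vectors. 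This does not come for free from $\fsl_2$ representation theory alone.

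The standard route, and the one Hitchin follows, goes in the opposite direction: one first produces $\sigma$ as a Lie-algebra automorphism by an independent construction (built from the compact involution $\rho$, a compatible split real structure, and the principal $\fsl_2$) and then verifies the two characterizing conditions. With that order of argument your parity identity becomes a \emph{consequence} rather than a hypothesis. If you want to complete your approach as written, you must either cite the parity result or supply a direct proof; simply invoking Kostant by name is not enough, since it is not clear which statement of his you mean.

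Your argument for $\sigma\rho=\rho\sigma$ is fine once $\sigma$ is known to be an automorphism. The $D_{2k}$ wrinkle you flag is indeed harmless: since your $\sigma$ acts on each isotypic block only through the exponent $m_n$, it automatically commutes with any map permuting two copies of equal exponent.
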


The fixed locus of $\rho$ is a real Lie subalgebra of $\fg$, corresponding to a compact real form $K$ of $G$. Similarly the fixed locus of $\lambda = \sigma \circ \rho$ is a real Lie subalgebra of $\fg$,
corresponding to a split real form $G^r$.

Both $\rho$ and $\sigma$
preserve the principal $\fsl(2,\C)$
(though they act non-trivially on it).
It follows that the principal embedding
$\iota: SL(2,\C) \hookrightarrow G$
restricts to $\iota: SL(2,\R) \hookrightarrow G^r$,
and to $\iota: SU(2) \hookrightarrow K$.

When $G = SL(N,\C)$, if we choose the standard Cartan subgroup
and simple roots, we have $\rho(X) = -X^\dagger$
and $\sigma(X) = -SX^TS^{-1}$. Thus in this case $K = SU(N)$.

\subsection{Higgs bundles}

Fix a compact Riemann surface $C$ of genus $g \ge 2$.
\begin{defn}
A \ti{$G$-Higgs bundle over $C$} is a tuple $(P, \bar\partial_P, \varphi)$:
\begin{itemize}
\item A principal $G$-bundle $P$ over $C$,
\item A holomorphic structure $\bar\partial_P$ on $\ad P$,
\item A holomorphic section $\varphi$ of $\ad P \otimes K_C$.
\end{itemize}
\end{defn}

\subsection{Harmonic reductions}

In the principal bundle setting, the analogue of a Hermitian metric $h$
is a reduction of structure group from $G$ to $K$, i.e.~a
principal $K$-bundle $Q \subset P$.
So, suppose given a $G$-Higgs bundle $(P, \bar\partial_P, \varphi)$ with
a reduction $Q$.
Since the involution $\rho$ of $\fg$ is $K$-invariant,
using the reduction $Q \subset P$ it
induces an involution $\rho_Q$ of $\ad P$
(which acts trivially on $\ad Q$.)

When $G = SL(N,\C)$, $K = SU(N)$,
and $P$ is the bundle of frames in $E$,
a reduction $Q \subset P$
is equivalent to a Hermitian metric $h$ in $E$ inducing the trivial
metric on $\det E$; namely $Q$ consists of all frames which are
unitary for $h$.
Then $\rho_Q(\varphi) = -\varphi^{\dagger_h}$.

There is a unique connection $D_Q$ in $\ad Q$ whose $(1,0)$ part is
$\bar\partial_P$ (Chern connection),
\begin{equation}
  D_Q = \bar\partial_P + \partial_P^Q,
\end{equation}
where
\begin{equation}
  \partial_P^Q = \rho_Q \circ \bar\partial_P \circ \rho_Q.
\end{equation}
We denote its curvature $F_{D_Q} \in \Omega^{2}(\ad Q)$.
Now we can formulate the analogue of the harmonic metrics
from \S\ref{sec:harmonic-metrics}:
\begin{defn}
Given a $G$-Higgs bundle $(P, \bar\partial_P, \varphi)$, and $R \in \R^+$,
a \ti{harmonic reduction with parameter $R$} is a
reduction of structure group to $K$, $Q \subset P$,
such that
\begin{equation} \label{eq:harmonic-reduction}
 F_{D_Q} - R^2 [\varphi, \rho_Q(\varphi)] = 0.
\end{equation}
\end{defn}

\subsection{Real twistor lines}

Given a $G$-Higgs bundle $(P, \bar\partial_P, \varphi)$ with harmonic
reduction $Q$ there is a corresponding family of flat connections
in $P$, given by the formula
\begin{equation} \label{eq:hitchin-family-G}
  \nabla = \zeta^{-1} R \varphi + D_Q - \zeta R \rho_Q(\varphi), \qquad \zeta \in \C^\times.
\end{equation}
Indeed, the statement that $\nabla$ is flat for all $\zeta \in \C^\times$
is equivalent to \eqref{eq:harmonic-reduction}.

\subsection{The Hitchin component} \label{sec:hitchin-component-G}

\begin{defn}
The \ti{Hitchin base} is the vector space
\begin{equation}
	 \cB = \bigoplus_{n=1}^r H^0(C, K_C^{m_n+1}).
\end{equation}
\end{defn}
We will denote points of $\cB$ by $\bu = (\phi_1, \dots, \phi_r)$,
where $\phi_n \in H^0(C, K_C^{m_n + 1})$.
(Warning: this is \ti{inconsistent} with our notation in
\S\ref{sec:background-SLN}: what we call $\phi_n$ here would
have been called $\phi_{n+1}$ there.)

As in \S\ref{sec:hitchin-component-SLN} we fix a spin structure
$\cL$ on $C$.
Let $P_0$ denote the principal $\C^\times$-bundle of
frames in $\cL$; it carries a canonical holomorphic structure
induced from the one in $\cL$.
The distinguished trivializations of $\cL$ associated to patches $(U,z)$
induce distinguished elements of $P_0$.
Then:

\begin{defn} \label{def:hitchin-component-G}
The \ti{Hitchin component}
is a set of $G$-Higgs bundles
$(P, \bar\partial_P, \varphi_\bu)$, parameterized by $\bu \in \cB$,
as follows:
\begin{itemize}
 \item The bundle $P$ is
\begin{equation} \label{eq:PG}
 P = P_0 \times_{\C^\times} G,
\end{equation}
where we embed $\C^\times \hookrightarrow G$
by $\alpha \mapsto \alpha^H$.

\item $\bar\partial_P$ is the holomorphic structure on $P$ induced from the
one on $P_0$.

\item The Higgs field $\varphi_\bu \in \ad P \otimes K_C$ is,
in the distinguished trivializations,
\begin{equation} \label{eq:higgs-field-G}
\varphi_{\bu,z} = \left(X_- + \sum_{n=1}^{r} P_{n,z} X_{n}\right) \de z.
\end{equation}

\end{itemize}
\end{defn}

The following crucial fact is proven in \cite{MR1174252}:
\begin{thm} \label{thm:harmonic-existence-G}
Given a $G$-Higgs bundle in the Hitchin component,
and any $R \in \R^+$,
there exists a unique harmonic reduction $Q$ with parameter $R$.
\end{thm}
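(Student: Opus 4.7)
The plan is to deduce Theorem \ref{thm:harmonic-existence-G} from the general nonabelian Hodge correspondence for $G$-Higgs bundles, applied to the Hitchin section. This has two ingredients: verifying stability, and invoking the general existence/uniqueness theorem.

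First I would verify that every $(P, \bar\partial_P, \varphi_\bu)$ in the Hitchin component is stable as a $G$-Higgs bundle: for every reduction of $P$ to a proper parabolic subgroup $P' \subsetneq G$ under which $\varphi_\bu$ descends to a section of $\ad P' \otimes K_C$, a suitable degree inequality holds. The Higgs field always contains the principal nilpotent $X_-$, and $\bar\partial_P$ is induced from $\cL$ via the principal $\C^\times$-embedding $\alpha \mapsto \alpha^H$. Using the $H$-grading on $\fg$ and the explicit form \eqref{eq:higgs-field-G}, one classifies the possible $\varphi_\bu$-invariant parabolic reductions and checks the degree condition case by case; this is essentially the stability argument of \cite{MR1174252}.

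Having stability, I would apply the nonabelian Hodge correspondence for stable $G$-Higgs bundles (Hitchin, Simpson, Corlette, Donaldson in varying generalities), which yields a unique harmonic reduction $Q \subset P$ solving $F_{D_Q} = [\varphi, \rho_Q(\varphi)]$. Applying this to the rescaled bundle $(P, \bar\partial_P, R\varphi_\bu)$ --- still stable for $R > 0$, since stability is preserved under $\varphi \to R\varphi$ --- produces the unique $Q(R,\bu)$ solving \eqref{eq:harmonic-reduction}, completing the proof.

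The main obstacle is the stability verification, which for general simple $G$ requires structural Lie-theoretic input beyond what is used in the $SL(N,\C)$ setting. A viable alternative that sidesteps this question is a continuity method parallel to our proof of Theorem \ref{thm:oper-limit-SLN}: start at $\bu = \bzero$ with the explicit natural reduction $Q_\natural(R)$, namely the principal $SU(2)$-reduction composed with the uniformization metric on $C$; check that the linearization of the harmonic-reduction equation at $(Q_\natural(R), \bzero)$ is an isomorphism by an argument generalizing Lemma \ref{lem:vanishing-lemma-SLN}, using \eqref{eq:basiscommuteG} together with the involution $\sigma$ of Proposition \ref{lem:Gpositive} to produce the analogous vanishing statement on the $\sigma$-antiinvariant part of $\ad P$; and then extend in $\bu$ by the implicit function theorem. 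The hard part of such an argument is upgrading the local deformation to a global one on all of $\cB \times \R^+$, which requires a priori estimates along the deformation path to rule out blow-up.
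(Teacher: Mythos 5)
Your primary route---stability of the Hitchin-section Higgs bundles plus the existence/uniqueness theorem for harmonic reductions on stable $G$-Higgs bundles---is exactly how this fact is established in the literature; the paper itself gives no proof but simply cites \cite{MR1174252}, where Hitchin proves stability of the section and invokes the nonabelian Hodge theorem, so your argument is essentially the same as the paper's (deferred) one, with the rescaling $\varphi \to R\varphi$ handled correctly since stability is scale-invariant. The alternative continuity-method sketch is not needed and, as you note, would require global a priori estimates that go well beyond the local implicit-function-theorem argument used elsewhere in the paper.
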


\subsection{The linear involution}

The involution $\sigma: \fg \to \fg$ of \S\ref{sec:lie-stuff}
induces an automorphism $\sigma$ of $\ad P$,
where $P$ is the bundle \eqref{eq:PG}. (To see this involution is well
defined we use the fact that $\sigma(H) = H$.)
Note that
\begin{equation} \label{eq:sigma-higgs}
  \sigma(\varphi_\bu) = - \varphi_\bu.
\end{equation}
We define $\ad_\sigma P$ to be the
$\sigma$-invariant part of $\ad P$.

\subsection{The natural reduction}

Recall from \S\ref{sec:natural-metric} the natural
metric on $\cL$, induced by uniformization with parameter $R$.
Let $Q_{\natural,0}(R) \subset P_0$ be
the circle bundle of unit-norm vectors in $\cL$.
In the distinguished local trivializations of $P_0$,
$Q_{\natural,0}(R)$ is the $U(1)$-orbit of
$(\lambda_\natural / R)^\half$.

\begin{defn}
The \ti{natural reduction} $Q_{\natural}(R)$ of the bundle $P$
of \eqref{eq:PG} is
\begin{equation}
  Q_\natural(R) = Q_{\natural,0}(R) \times_{U(1)} K \subset P_0 \times_{\C^\times} G = P,
\end{equation}
where we use the
fact that for $\abs{\alpha} = 1$ we have $\alpha^H \in K$.
We abbreviate $Q_\natural(R = 1)$ as $Q_\natural$.
In particular, (cf. \eqref{eq:h-natural-R-dependence})
\begin{equation}
  Q_\natural(R) =  R^{-H/2} Q_\natural.
\end{equation}
\end{defn}

\begin{prop} \label{prop:Q-natural-harmonic-G}
The harmonic reduction on the Higgs bundle
$(P, \bar\partial_P, \varphi_\bzero)$ with parameter $R$
is $Q_\natural(R)$.
\end{prop}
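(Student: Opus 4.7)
The plan is to verify the defining equation \eqref{eq:harmonic-reduction} for $Q_\natural(R)$ by direct computation in a distinguished local trivialization, paralleling the proof of Proposition \ref{prop:h-natural-harmonic} in the $SL(N,\C)$ case. The structural key is that every ingredient lives in the image of the principal embedding $\iota: SL(2,\C) \hookrightarrow G$: the bundle $P = P_0 \times_{\C^\times} G$ is extended from $P_0$ along $\alpha \mapsto \alpha^H$, which factors through $\iota$; the reduction $Q_\natural(R) = Q_{\natural,0}(R) \times_{U(1)} K$ is built from the principal $U(1) \subset SU(2) \subset K$; and the Higgs field $\varphi_\bzero = X_- \, \de z$ is valued in the principal $\fsl(2,\C)$. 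Hence once we verify the harmonic equation for our candidate $Q_\natural(R)$, Theorem \ref{thm:harmonic-existence-G} guarantees that it is in fact the unique harmonic reduction.

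First I would compute the Chern connection $D_{Q_\natural(R)}$. The distinguished section of $\cL$ has squared $h_\natural(R)$-norm equal to $R/\lambda_\natural$ in the distinguished trivialization, so the Chern connection on the underlying line bundle is $\de - (\partial_z \log \lambda_\natural) \, \de z$ (the constant $R$ dropping out under $\partial \log$). Extending along $\iota$ sends the generator of $\mathrm{Lie}(\C^\times)$ to $H \in \fg$, so
\begin{equation}
D_{Q_\natural(R)} = \de - (\partial_z \log \lambda_\natural) \, H \, \de z, \qquad F_{D_{Q_\natural(R)}} = \partial_{\bar z} \partial_z \log \lambda_\natural \cdot H \, \de z \wedge \de \bar z.
\end{equation}

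Next, since the Chevalley-basis identity $\rho(x_\alpha) = -x_{-\alpha}$ applied to \eqref{eq:principal-sl2-generators} gives $\rho(X_-) = -X_+$, and since the distinguished frame is related to a $Q_\natural(R)$-frame by the adjoint action of $(\lambda_\natural/R)^{H/2} \in G$, a short computation yields
\begin{equation}
\rho_{Q_\natural(R)}(\varphi_\bzero) = -R^{-2} \lambda_\natural^2 \, X_+ \, \de \bar z.
\end{equation}
Combining the two displays above with the $\fsl(2,\C)$-relation $[X_-, X_+] = -H$ gives
\begin{equation}
F_{D_{Q_\natural(R)}} - R^2 [\varphi_\bzero, \rho_{Q_\natural(R)}(\varphi_\bzero)] = \bigl(\partial_{\bar z} \partial_z \log \lambda_\natural - \lambda_\natural^2\bigr) H \, \de z \wedge \de \bar z,
\end{equation}
which vanishes by the uniformization equation \eqref{eq:lambda-natural-de} satisfied by $\lambda_\natural$.

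There is no real analytic obstacle: the content is purely algebraic once one invokes Theorem \ref{thm:harmonic-existence-G} for uniqueness, and it essentially reduces to the $SL(2,\C)$ identity already verified inside the proof of Proposition \ref{prop:h-natural-harmonic}. The only place requiring care is the bookkeeping of signs and $R$-powers --- in particular the correct identification of the gauge transformation $(\lambda_\natural/R)^{H/2}$ relating the distinguished frame and the $Q_\natural(R)$-frame, and the signs coming from $\rho$ acting on the principal $\fsl(2,\C)$-triple --- which can be cross-checked against the explicit $N = 2$ computation in \S\ref{sec:proof-SU2}.
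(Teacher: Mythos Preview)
Your proposal is correct and follows essentially the same route as the paper's own proof: both compute $F_{D_{Q_\natural(R)}}$ and $\rho_{Q_\natural(R)}(\varphi_\bzero)$ in the distinguished trivialization via the principal $\fsl(2,\C)$, then invoke the uniformization equation \eqref{eq:lambda-natural-de}. Your version is slightly more explicit (writing out the Chern connection form and invoking Theorem \ref{thm:harmonic-existence-G} for uniqueness), but the computation and its structure are the same.
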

\begin{proof} This is essentially the same computation
as in Proposition \ref{prop:h-natural-harmonic}. The
curvature $F_{D_{Q_\natural(R)}}$ is induced from
$F_{D_{Q_{\natural,0}(R)}} = \partial \bar\partial \log \lambda_\natural$
through the embedding $U(1) \hookrightarrow K$, so
\begin{equation}
 F_{D_{Q_\natural(R)}} = (\partial \bar\partial \log \lambda_\natural) H .
\end{equation}
In the distinguished local trivializations of $P$,
$Q_\natural(R)$ is the $K$-orbit
$(\lambda_\natural / R)^{H/2} K$.
Thus, in the distinguished local trivializations
$\rho_{Q_\natural(R)}$ acts by
$(\lambda_\natural / R)^{H/2} \circ \rho \circ (\lambda_\natural / R)^{-H/2}$.
So we have
\begin{align}
  R^2 \left[\varphi_\bzero, \rho_{Q_\natural(R)}(\varphi_\bzero)\right] &= R^2 \left[X_- \de z, (\lambda_\natural / R)^{H/2} \rho((\lambda_\natural / R)^{-H/2} X_-) \de \bar{z} \right] \\
  &= - \lambda_\natural^2 [X_-, X_+] \de z \wedge \de \bar{z} \\
  &= \lambda_\natural^2 H \, \de z \wedge \de \bar{z}.
\end{align}
Combining these and using \eqref{eq:lambda-natural-de} gives the result.
\end{proof}

\subsection{{$G$}-opers} Now we recall the notion of
$G$-oper. The principal bundle version of the filtration
from \S\ref{sec:opers-SLN} is a reduction to the Borel
subgroup $B \subset G$.

\begin{defn} \label{def:opers-G} A \ti{$G$-oper} on $C$ is a tuple
$(P, \nabla, F)$ where:
\begin{itemize}
\item $P$ is a principal $G$-bundle,
\item $\nabla$ is a flat connection in $P$,
\item $F \subset P$ is a reduction to $B \subset G$,
\end{itemize}
such that
\begin{itemize}
\item $F$ is holomorphic, with respect to the holomorphic structure on $P$
induced by $\nabla$,
\item $\nabla$ is in ``good position'' with respect to $F$, in the following
sense. Choose locally a connection $\nabla_B$ on $P$ induced from a flat
holomorphic connection on $F$, and consider $\nabla - \nabla_B \in
\Omega^{1,0}(\ad P)$. Changing the choice of $\nabla_B$ leaves invariant the
class $[\nabla - \nabla_B] \in \Omega^{1,0}(\ad P / \! \ad F)$. Now
$(\ad P / \! \ad F) = F \times_B (\fg/\fb)$, from which it follows that
the set of $B$-orbits of $(\ad P / \! \ad F)$
is just the set of $B$-orbits of $\fg / \fb$. Thus, at each point of $C$,
$\nabla$ determines a $B$-orbit $\cO_\nabla$ of
 $\fg / \fb \otimes \Omega^{1,0}$.
The ``good position'' condition is that $\cO_\nabla$ is the orbit containing
$[X_-] \de z$ (note this condition is independent of the choice of local
coordinate, since the $B$-orbit of $[X_-]$ contains all scalar multiples
of $[X_-]$.)
\end{itemize}
\end{defn}

\subsection{A construction of {$G$}-opers}
Next we recall a construction of $G$-opers parallel
to \S\ref{sec:oper-construction-SLN}.

\begin{prop} \label{prop:oper-construction-G} We have the following:
\begin{itemize}
  \item For any $\hbar \in \C$,
the transition functions
\begin{equation} \label{eq:Eh-transition-G}
  T_{\hbar,z,z'} = \alpha_{z,z'}^H \exp\left(\hbar \alpha_{z,z'}^{-1} \partial_z \alpha_{z,z'} X_+\right)
\end{equation}
define a holomorphic $G$-bundle
$(P_{\hbar}, \bar\partial_\hbar)$ over $C$,
with a reduction to a $B$-bundle $F_\hbar$,
and equipped with a distinguished trivialization
for each local coordinate patch $(U,z)$ on $C$.

\item For any $\hbar \in \C^\times$ and $\bu \in \cB$,
there exists a canonical $G$-oper
$(P_{\hbar}, \nabla_{\hbar,\bu}, F_\hbar)$,
compatible with the holomorphic structure $\bar\partial_{\hbar}$.
Relative to the distinguished trivializations of $P_{\hbar}$
on patches $(U,z)$ in the atlas
given by Fuchsian uniformization, $\nabla_{\hbar,\bu}$
is given by
\begin{equation} \label{eq:oper-local-G}
 \nabla_{\hbar,\bu,z} = \de + \hbar^{-1} \varphi_{\bu,z},
\end{equation}
where (as we have stated before)
\begin{equation} \label{eq:higgs-field-G-redux}
\varphi_{\bu,z} = \left(X_- + \sum_{n=1}^{r} P_{n,z} X_{n}\right) \de z.
\end{equation}

\end{itemize}
\end{prop}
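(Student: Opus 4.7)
The plan is to mirror the proof of Proposition \ref{prop:oper-construction-SLN} step by step, replacing matrix manipulations with computations inside the principal $\fsl(2,\C)\subset\fg$ and its commutant. All the essential algebraic identities used in the $SL(N,\C)$ argument continue to hold in this setting: the $\fsl(2,\C)$-triple relations \eqref{eq:SL2-relations-redux}, the commutation $[X_+, X_n]=0$ from \eqref{eq:basiscommuteG}, and the conjugation formula $\alpha^{-H} X_+ \alpha^H = \alpha^{-2} X_+$ (again obtained by exponentiating the bracket $[H,X_+]=2X_+$).

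First I would verify the cocycle condition for the $T_{\hbar,z,z'}$ defined by \eqref{eq:Eh-transition-G}. As in the $SL(N,\C)$ case, I fix an auxiliary metric $g = \lambda_z^2\,\de z\,\de\bar z$ on $C$, set $f_z = \partial_z \log \lambda_z$, and define $M_{\hbar,z} = \exp(\hbar f_z X_+)$. The identities $f_{z'} = (\partial_z\alpha_{z,z'})\alpha_{z,z'} + \alpha_{z,z'}^2 f_z$ and $\alpha^{-H}X_+\alpha^H = \alpha^{-2}X_+$ then yield
\begin{equation}
T_{\hbar,z,z'} = M_{\hbar,z'}\,\alpha_{z,z'}^H\, M_{\hbar,z}^{-1},
\end{equation}
from which the cocycle condition is immediate. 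Since the transition functions are holomorphic and valued in $B$ (each $\alpha^H$ lies in the Cartan and $\exp(\hbar\, c\, X_+)\in\exp(\fn)$), they define a holomorphic principal $G$-bundle $P_\hbar$ together with a $B$-reduction $F_\hbar$, with distinguished local trivializations coming from coordinate charts.

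Next I construct the connection $\nabla_{\hbar,\bu}$. Working on a Fuchsian atlas, I check that the local expression $\nabla_{\hbar,\bu,z} = \de + \hbar^{-1}\varphi_{\bu,z}$ transforms consistently under the transition maps. The computation is formally identical to the $SL(N,\C)$ case: I expand $T_{\hbar,z,z'}\circ\nabla_{\hbar,\bu,z}\circ T_{\hbar,z,z'}^{-1}$ into three pieces --- the contribution of $\de$, the $\hbar^{-1}X_-$ contribution (which uses only the $\fsl(2,\C)$ relations and generates the compensating $H$ and $X_+$ terms), and the $\hbar^{-1}\sum P_{n,z} X_n$ contribution (which is simpler because $[X_+,X_n]=0$, so conjugation by $\exp(\hbar\alpha^{-1}\partial_z\alpha\, X_+)$ is trivial and only the grading factor $\alpha^{2m_n}$ survives, precisely matching the transformation of $\phi_n\in H^0(C,K_C^{m_n+1})$). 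The terms proportional to $H$ cancel, and all contributions to $X_\pm$ and $X_n$ combine into $\nabla_{\hbar,\bu,z'}$ except for the same error term $\varepsilon X_+$ with $\varepsilon = -\hbar\bigl((\partial_z\alpha)^2 + \alpha^2\partial_z^2\log\alpha\bigr)$, which vanishes whenever $z'$ is a Möbius function of $z$. This is exactly where the restriction to the Fuchsian atlas is essential.

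Finally, I verify the oper conditions of Definition \ref{def:opers-G}. Holomorphicity of $F_\hbar$ is built into the construction, since $F_\hbar$ is defined via holomorphic $B$-valued transition functions. For the ``good position'' condition, I take the local flat $B$-connection $\nabla_B = \de$ in the distinguished trivialization (this descends to a well-defined holomorphic connection on $F_\hbar$ locally because $T_{\hbar,z,z'}\in B$), so that
\begin{equation}
\nabla_{\hbar,\bu} - \nabla_B = \hbar^{-1}\varphi_{\bu,z}\,,
\end{equation}
and its class in $\Omega^{1,0}(\ad P/\!\ad F)$ is $\hbar^{-1}[X_-]\,\de z$. Since $[X_-]\,\de z$ generates the distinguished $B$-orbit $\cO_\nabla$ in $(\fg/\fb)\otimes\Omega^{1,0}$, the good position condition holds. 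The main obstacle is nothing conceptual but rather bookkeeping: one must check carefully that the algebraic identities used in the $SL(N,\C)$ proof genuinely depend only on the principal $\fsl(2,\C)$ relations and on $[X_+,X_n]=0$, so that they transport verbatim to the general $G$ setting. Once this is confirmed the whole proof is essentially a translation of \S\ref{sec:oper-construction-SLN}.
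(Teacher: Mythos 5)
Your proposal is correct and follows essentially the same route as the paper: the paper's own proof simply observes that the cocycle and connection computations from Proposition \ref{prop:oper-construction-SLN} use only the principal $\fsl(2,\C)$ relations and $[X_+,X_n]=0$, so they carry over verbatim, then checks the $B$-reduction ($H,X_+\in\fb$) and the good-position condition by projecting $\hbar^{-1}\varphi_{\bu,z}$ to $\fg/\fb$ to get $\hbar^{-1}[X_-]\,\de z$, exactly as you do. Your version just spells out the transported computations in more detail.
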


\begin{proof}
The checks that $P_\hbar$ and its connection $\nabla_{\hbar,\bu}$
are globally well defined are
just as in the proof of Proposition \ref{prop:oper-construction-SLN}:
indeed the computations there only involved the principal
$\fsl(2,\C)$ generators $H, X_+, X_-$
and their commutation relations with the $X_n$, so they
go through unchanged.

The $B$-reduction
$F_\hbar$ is represented by $B \subset G$ in the
distinguished trivializations; this is indeed globally defined,
since $X_+, H \in \fb$ and thus the transition
functions $T_{\hbar,z,z'}$ of \eqref{eq:Eh-transition-G} are valued in $B$.
To see that $\nabla_{\hbar,\bu}$ is an oper, we can compute in any
distinguished trivialization, and locally take $\nabla_B$ to be
the trivial connection; then by \eqref{eq:oper-local-G},
the class $[\nabla_{\hbar,\bu}-\nabla_B]$ is
represented by the projection of $\hbar^{-1} \varphi_{\bu,z}$ to $\fg / \fb$.
Using \eqref{eq:higgs-field-G-redux} and the fact that each $X_n \in \fb$,
this projection is simply $\hbar^{-1} [X_-] \de z$. This is in the $B$-orbit
of $[X_-] \de z$, as desired.
\end{proof}

\subsection{The main theorem for general {$G$}}

Now we state and prove our main theorem for general
complex simple simply connected $G$:

\begin{thm} \label{thm:oper-limit-G}
Fix any $\bu \in \cB$.
Let $(P, \bar\partial_P, \varphi_\bu)$ be the corresponding Higgs
bundle in the Hitchin component, and let $Q(R,\bu) \subset P$
be the family of harmonic reductions solving the rescaled Hitchin
equation \eqref{eq:harmonic-reduction}. Let $F$ be the
reduction of $P$ to $B$ which, in each distinguished local
trivialization, is given by $B \subset G$.

Fix $\hbar \in \C^\times$ and let
\begin{equation} \label{eq:nabla-R-h-G}
  \nabla_{R,\hbar,\bu} = \hbar^{-1} \varphi_\bu + D_{Q(R,\bu)} - \hbar R^2 \rho_{Q(R,\bu)}(\varphi_\bu).
\end{equation}
Then, as $R \to 0$ the flat connections $\nabla_{R,\hbar,\bu}$
converge to a flat connection
\begin{equation} \label{eq:limit-connection-G}
 \nabla_{0,\hbar,\bu} = \hbar^{-1} \varphi_\bu + D_{Q_\natural} - \hbar \rho_{Q_\natural}(\varphi_{\bzero}),
\end{equation}
and $(P, \nabla_{0,\hbar,\bu}, F)$
is a $G$-oper, equivalent to the $G$-oper
$(P_{\hbar}, \nabla_{\hbar,\bu}, F_{\hbar})$
of Proposition \ref{prop:oper-construction-G}.
\end{thm}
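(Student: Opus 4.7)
The plan is to mimic the proof of Theorem \ref{thm:oper-limit-SLN} from Section \ref{sec:proof-SUN}, replacing the $SL(N,\C)$-specific ingredients with their principal-bundle analogues. The starting point is to parameterize the harmonic reduction $Q(R,\bu)$ by writing $Q(R,\bu) = \exp(\chi(R,\bu)) \cdot Q_\natural(R)$, where $\chi$ is a section of $\ad P$ lying in the subspace $V$ cut out by the two conditions $\rho_{Q_\natural(R)}(\chi) = -\chi$ (the usual constraint giving a $K$-reduction) and $\sigma(\chi) = \chi$ (the analogue of the $S$-skew-adjointness from the $SL(N,\C)$ case, which is natural in view of \eqref{eq:sigma-higgs} and \eqref{eq:sigma-rho-commute}). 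In analogy with \eqref{eq:chi-equation-SLN}, define
\begin{equation}
N_\bu(\chi, R) = \bigl[\bar\partial_P, \, e^{-\chi} \circ \partial_P^{Q_\natural} \circ e^\chi \bigr] - \bigl[\varphi_\bu, \, e^{-\chi} \circ \rho_{Q_\natural}(\varphi_{R^2 \bu}) \circ e^\chi \bigr],
\end{equation}
using the $R$-scaling rule $\rho_{Q_\natural(R)} = R^{-H/2} \rho_{Q_\natural} R^{H/2}$, which factors out the explicit $R$-dependence exactly as in \eqref{eq:h-adjoint-SLN}.

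Next I would check the six properties that drove the argument in Lemma \ref{lem:chi-estimate-SLN}. Items (i)--(iii) are structural: that $N_\bu(\cdot, R)$ preserves $V$ follows from the commutativity $\sigma \circ \rho = \rho \circ \sigma$ and the $\sigma$-invariance \eqref{eq:sigma-higgs} of $\varphi_\bu$; the equivalence with the harmonic-reduction equation \eqref{eq:harmonic-reduction} is a direct computation; and $N_\bu(0,0) = 0$ uses Proposition \ref{prop:Q-natural-harmonic-G} together with the fact that $\varphi_\bu - \varphi_\bzero$ lies in $\bigoplus_n \C X_n$, and each $X_n$ commutes with $X_+$ by \eqref{eq:basiscommuteG} (so it commutes with the relevant adjoint of $\varphi_\bzero$). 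The key analytic point is (iv), the bijectivity of the linearization
\begin{equation}
\cL_\bu \dot\chi = \bar\partial_P \partial_P^{Q_\natural} \dot\chi + \bigl[\varphi_\bu, [\rho_{Q_\natural}(\varphi_\bzero), \dot\chi] \bigr].
\end{equation}
At $\bu = \bzero$ an $L^2$ pairing shows
$\IP{\dot\chi, \cL_\bzero \dot\chi} = \norm{\partial_P^{Q_\natural}\dot\chi}^2 + \norm{[\rho_{Q_\natural}(\varphi_\bzero),\dot\chi]}^2$, and vanishing of the second summand forces $[X_+, \dot\chi] = 0$; Kostant's description of $\ker \ad X_+$ as the span of the highest-weight vectors $X_n$, all of which satisfy $\sigma(X_n) = -X_n$ by Proposition \ref{lem:Gpositive}, then forces $\dot\chi = 0$ since $\dot\chi \in V$. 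Combined with the fact that $\cL_\bzero$ is a deformation of the self-adjoint elliptic operator $\bar\partial_P \partial_P^{Q_\natural}$, hence has index zero, this gives bijectivity at $\bu = \bzero$; extension to general $\bu$ is done by the grading argument of Section \ref{sec:proof-SUN}, using that $\varphi_\bu - \varphi_\bzero$ and $\rho_{Q_\natural}(\varphi_\bzero)$ are strictly upper triangular relative to the grading from $\ad H$, so $\cL_\bu - \cL_\bzero$ strictly raises grade.

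Given bijectivity of $\cL_\bu$, the real-analytic implicit function theorem produces a unique real-analytic $\chi(R,\bu) \in V$ solving $N_\bu(\chi(R,\bu), R) = 0$, and the expansion $N_\bu(\chi, R) = N_\bu(\chi, 0) + O(R^4)$ forces the first three Taylor coefficients in $R$ to vanish, giving $\chi(R,\bu) = O(R^4)$. Substituting $Q(R,\bu) = \exp(\chi(R,\bu)) Q_\natural(R)$ into \eqref{eq:nabla-R-h-G} and using the same $R^{-2}$ cancellation in the adjoint term as in \eqref{eq:h-adjoint-SLN}, the limit $R \to 0$ yields \eqref{eq:limit-connection-G}. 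Finally, to identify this with the oper of Proposition \ref{prop:oper-construction-G}, gauge transform by $M_{\hbar,z} = \exp(\hbar(\partial_z \log \lambda_\natural) X_+)$: the computation is formally identical to the $SL(N,\C)$ case because it only involves the principal $\fsl(2,\C)$-relations and the commutativity $[X_+, X_n] = 0$. One obtains $\de + \hbar^{-1} \varphi_{\bu,z} + \hbar\,\varepsilon X_+$ with the same error term $\varepsilon = (2(\partial_z \lambda_\natural)^2 - \lambda_\natural \partial_z^2 \lambda_\natural)/\lambda_\natural^2$, which vanishes on charts from Fuchsian uniformization. The main obstacle is really the invertibility statement (iv): everything else is a faithful translation of the $SL(N,\C)$ proof, but the kernel argument there relied on the explicit Lemma \ref{lem:vanishing-lemma-SLN}, and here one must invoke Kostant's structure theorem together with the sign property of $\sigma$ on $\ker \ad X_+$ to get the same conclusion.
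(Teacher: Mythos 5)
Your proposal is correct and follows essentially the same route as the paper: the same parameterization of $Q(R,\bu)$ relative to $Q_\natural(R)$, the same operator $N_\bu(\chi,R)$, the same six-step lemma (with the kernel of the linearization killed by Kostant's description of $\ker \ad X_+$ together with $\sigma|_{\ker \ad X_+} = -1$, which is exactly the general-$G$ substitute for Lemma \ref{lem:vanishing-lemma-SLN}), the same implicit-function-theorem/Taylor argument giving $\chi = O(R^4)$, and the same gauge transformation $M_{\hbar,z} = \exp(\hbar(\partial_z \log \lambda_\natural)X_+)$ at the end. The only blemish is a sign: with your definition of $N_\bu$ the linearization is $\bar\partial_P \partial_P^{Q_\natural}\dot\chi - [\varphi_\bu,[\rho_{Q_\natural}(\varphi_\bzero),\dot\chi]]$, not $+$, but since the $L^2$ positivity identity you then use is the correct one this does not affect the argument.
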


In parallel to the proof of Theorem
\ref{thm:oper-limit-SLN}, the main technical issue is to show that
the harmonic reduction $Q(R,\bu)$ approaches $Q_\natural(R)$
as $R \to 0$. We can formulate this as follows.
Let $\Ad_{\natural,R} P$ be the group of automorphisms of $P$
preserving $Q_\natural(R)$, and $\ad_{\natural,R} P$ its
Lie algebra. We have
\begin{equation} \label{eq:Q-chi-G}
  Q(R,\bu) = \e^{-\half \chi(R,\bu)} Q_\natural(R)
\end{equation}
for a unique $\chi(R,\bu) \in \ad_{\natural,R} P$.
Then, parallel to Lemma \ref{lem:chi-estimate-SLN}:
\begin{lem} We have
\begin{equation}
  \chi(R,\bu) = O(R^4).
\end{equation}
\end{lem}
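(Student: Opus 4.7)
The plan is to follow, step by step, the proof of Lemma \ref{lem:chi-estimate-SLN}, rewriting each ingredient in the principal-bundle language with the involution $\sigma$ playing the role of the $S$-symmetry and $\rho$ playing the role of Hermitian conjugation. All the essential algebraic inputs, including the vanishing lemma, descend from Kostant's principal $\fsl(2,\C)$-triple together with Proposition \ref{lem:Gpositive}.

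I would introduce the nonlinear operator
\begin{equation}
N_\bu(\chi, R) = \bar\partial_P\bigl(e^{-\chi} \circ \partial_P^{Q_\natural} \circ e^\chi\bigr) - \bigl[\varphi_\bu,\, e^{-\chi} \rho_{Q_\natural}(\varphi_{R^2\bu}) e^\chi\bigr],
\end{equation}
where the factor $R^H$ relating $Q_\natural(R)$ and $Q_\natural$ has been absorbed by rescaling $\bu \mapsto R^2\bu$, in parallel with \eqref{eq:h-adjoint-SLN}. After substituting $Q = e^{-\chi/2} Q_\natural(R)$ (cf.\ \eqref{eq:Q-chi-G}), the zero set of $N_\bu(\cdot, R)$ is precisely the harmonic-reduction equation \eqref{eq:harmonic-reduction}. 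The vanishing $N_\bu(0,0)=0$ follows from Proposition \ref{prop:Q-natural-harmonic-G} at $R=1$, together with the observation that the difference $\varphi_\bu - \varphi_\bzero = \sum_{n=1}^r P_n X_n$ commutes with $\rho_{Q_\natural}(\varphi_\bzero)$: the latter is proportional to $X_+$ in the distinguished trivialization, and $[X_+, X_n] = 0$ by \eqref{eq:basiscommuteG}.

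The main algebraic step is the bijectivity of the linearization $\mathcal{L}_\bu := D_\chi N_\bu|_{(0,0)}$, which rests on the vanishing statement: if $\chi \in \ad_\sigma P$ satisfies $[X_+,\chi]=0$, then $\chi = 0$. This is immediate from Proposition \ref{lem:Gpositive}, since $\sigma$ acts as $-1$ on $\ker(\ad X_+)$, forcing $\chi = -\chi$. The $L^2$ pairing on $\ad_\sigma P$ induced by the Hermitian form \eqref{eq:hermitian-form-G} then gives
\begin{equation}
\IP{\dot\chi,\, \mathcal{L}_\bzero \dot\chi} = \norm{\partial_P^{Q_\natural} \dot\chi}^2 + \norm{[\rho_{Q_\natural}(\varphi_\bzero),\, \dot\chi]}^2,
\end{equation}
so $\mathcal{L}_\bzero$ has trivial kernel; ellipticity and vanishing index (via deformation to $\bar\partial_P \partial_P^{Q_\natural}$) then give surjectivity. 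For general $\bu$, the $\ad H$-weight decomposition of $\ad P$ restricts to a grading on $\ad_\sigma P$ which $\mathcal{L}_\bzero$ preserves, while $\mathcal{L}_\bu - \mathcal{L}_\bzero = [\varphi_\bu - \varphi_\bzero,\, [\rho_{Q_\natural}(\varphi_\bzero),\, \cdot\,]]$ strictly raises the grading (both factors live in strictly positive $\ad H$-weight); the lowest-grade-component argument from Lemma \ref{lem:chi-estimate-SLN} then shows $\mathcal{L}_\bu$ also has trivial kernel, hence is an isomorphism.

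Having the linearization as an isomorphism, the analytic implicit function theorem produces a real-analytic family $\chi(R,\bu) \in \ad_\sigma P \cap \ad_{\natural,R} P$ solving $N_\bu(\chi(R,\bu), R) = 0$ with $\chi(0,\bu) = 0$. The estimate $\chi = O(R^4)$ then falls out of Taylor expansion: since $R^2\bu$ depends on $R$ only through powers $R^{2(m_n+1)} \ge R^4$, one has $N_\bu(\chi, R) = N_\bu(\chi, 0) + O(R^4)$, so the Taylor coefficients $\chi_1,\chi_2,\chi_3$ each satisfy $\mathcal{L}_\bu \chi_n = 0$ and vanish. The main technical obstacle I anticipate is checking that the $\ad H$-grading restricts well to $\ad_\sigma P$ inside each irreducible Kostant component $\fv_n$, and that $\mathcal{L}_\bu - \mathcal{L}_\bzero$ genuinely raises it there; once this is confirmed, the remainder of the argument is a transcription of the $SL(N,\C)$ proof.
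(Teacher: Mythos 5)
Your proposal is essentially the paper's own proof: it introduces the same operator $N_\bu(\chi,R)$ acting on $\Omega^0(\ad_\sigma P)$ and runs through the same six steps (identification of its zero set with the harmonic-reduction equation after absorbing $R^H$ into $\varphi_{R^2\bu}$, the vanishing $N_\bu(0,0)=0$ via Proposition \ref{prop:Q-natural-harmonic-G} and $[X_+,X_n]=0$, bijectivity of the linearization from the fact that $\sigma=-1$ on $\ker(\ad X_+)$ kills $\sigma$-invariant kernel elements together with the $\ad H$-grading argument for general $\bu$, the analytic implicit function theorem, and the Taylor expansion forcing $\chi_1=\chi_2=\chi_3=0$ because $\varphi_{R^2\bu}$ enters only at order $R^{2(m_n+1)}\ge R^4$). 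The two points you flag or leave implicit are handled exactly as in the $SL(N,\C)$ case: $\sigma(H)=H$ guarantees the grading restricts to $\ad_\sigma P$, and the paper shows the implicit-function-theorem solution lies in $\ad_{\natural,R}P$ (hence coincides with the $\chi$ of the harmonic reduction) by the $\chi\mapsto\chi^\dagger$ symmetry of $N_\bu$ plus uniqueness.
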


\begin{proof}
First some notation: for $\bu = (\phi_i)_{i=1}^r \in \cB$
and $\alpha \in \R^+$,
we let
\begin{equation}
 \alpha \bu = (\alpha^{m_i+1} \phi_i)_{i=1}^r \in \cB.
\end{equation}
Now define
\begin{equation} \label{eq:chi-equation-G}
  N_\bu(\chi,R) = \left[\bar\partial_P, \e^{-\chi} \circ \partial_P^{Q_\natural} \circ \e^{\chi} \right] - \left[\varphi_\bu, \left(\e^{-\chi} \circ \rho_{Q_\natural} \right) (\varphi_{R^2 \bu}) \right].
\end{equation}
Then, just as in the proof of Lemma \ref{lem:chi-estimate-SLN},
we proceed in steps:
\medskip
\begin{enumerate}
\item \label{item:op-type-G} For any fixed $R$, $N_\bu(\cdot, R)$ is a nonlinear operator
\begin{equation}
  N_\bu(\cdot, R): \Omega^0(\ad_\sigma P) \to \Omega^2(\ad_\sigma P).
\end{equation}
\item \label{item:N-harmonic-G} For any $R > 0$ and $\chi \in \ad_{\natural,R} P$, we have
$N_\bu(\chi,R) = 0$ iff $\e^{-\half \chi} Q_\natural(R)$ is the harmonic reduction for $\varphi_\bu$ with parameter $R$.
\item \label{item:N-zero-G} $N_\bu(0,0) = 0$.
\item \label{item:linearization-bijective-G} The linearization $\left. D_\chi N_\bu \right|_{(0,0)}$ is bijective.
\item \label{item:solution-G} There exists a real analytic $\chi(R,\bu) \in \ad_{\natural,R} P \cap \ad_\sigma P$ for $R \in [0,R_0)$ such that
\begin{equation} \label{eq:N-solution-G}
 N_\bu(\chi(R,\bu),R) = 0.
\end{equation}
\item \label{item:taylor-R4-G} The first nonzero term
in the Taylor expansion of
$\chi(R,\bu)$ around $R=0$ appears at order $R^4$.
\end{enumerate}
\medskip
Each step is strictly parallel to the analogous step in
the proof of Lemma \ref{lem:chi-estimate-SLN}; we just mention
the necessary substitutions.
In step (\ref{item:op-type-G}) the necessary compatibility with $\sigma$
is \eqref{eq:sigma-rho-commute} and \eqref{eq:sigma-higgs}.
For step (\ref{item:N-harmonic-G}) we first note
that if $\chi \in \ad_{\natural,R} P$ then
\begin{equation}
  \rho_Q = \e^{-\half \chi} R^{-H/2} \circ \rho_{Q_\natural} \circ R^{H/2} \e^{\half \chi} = \e^{-\chi} R^{-H} \circ \rho_{Q_\natural}.
\end{equation}
Thus the curvature of $D_Q$ is
\begin{equation}
  F_{D_Q} = \left[\bar\partial_P, \partial_P^{Q} \right] =
  \left[\bar\partial_P, \e^{-\chi} {R^{-H}} \circ \partial_P^{Q_\natural} \circ {R^H} \e^{\chi} \right] = \left[\bar\partial_P, \e^{-\chi} \circ \partial_P^{Q_\natural} \circ \e^{\chi} \right],
\end{equation}
and
\begin{equation}
  \rho_Q (\varphi_\bu) = (\e^{-\chi} \circ \rho_{Q_\natural} \circ R^H)(\varphi_\bu) = R^{-2} (\e^{-\chi} \circ \rho_{Q_\natural})(\varphi_{R^2 \bu})
\end{equation}
where we used
\begin{equation}
  R^H \varphi_\bu = R^{-2} \varphi_{R^2\bu}.
\end{equation}
In step (\ref{item:N-zero-G}) we use
Proposition \ref{prop:Q-natural-harmonic-G}.
In step (\ref{item:linearization-bijective-G}) we compute
\begin{equation}
\mathcal L_\bu (\dot\chi) :=  \left. D_\chi N_\bu\right|_{(0,0)} (\dot\chi) =
\bar\partial_P \partial_P^{Q_\natural} \dot\chi - \left[\varphi_\bu,\left[\rho_{Q_\natural}(\varphi_\bzero), \dot\chi\right]\right],
\end{equation}
and then proceed as in Lemma \ref{lem:chi-estimate-SLN},
using the $L^2$ norm on $\ad P$
induced by $Q_\natural$ and the Hermitian pairing
\eqref{eq:hermitian-form-G}, and the grading on $\fg$ induced by $H$.
The remaining steps (\ref{item:solution-G}), (\ref{item:taylor-R4-G})
are just as in the proof of Lemma \ref{lem:chi-estimate-SLN}.
\end{proof}

The remainder of the proof of Theorem \ref{thm:oper-limit-G}
is also strictly parallel to that of Theorem \ref{thm:oper-limit-SLN},
so we omit it.

\bibliographystyle{utphys}
\bibliography{oper-limit}

\end{document}